\theoremstyle{plain}
\newtheorem{thm}{Theorem}[section]
\newtheorem{cor}[thm]{Corollary}
\newtheorem{prop}[thm]{Proposition}
\newtheorem{conj}[thm]{Conjecture}
\theoremstyle{definition}
\newtheorem{defn}[thm]{Definition}
\theoremstyle{remark}
\newtheorem{rmk}[thm]{Remark}
\newcommand{\BC}{{\mathbb{C}}}
\newcommand{\BD}{{\mathbb{D}}}
\newcommand{\BH}{{\mathbb{H}}}
\newcommand{\BQ}{{\mathbb{Q}}}
\newcommand{\CC}{{\mathcal C}}
\newcommand{\CD}{{\mathcal D}}
\newcommand{\CP}{{\mathcal P}}
\newcommand{\Fg}{{\mathfrak{g}}}
\newcommand{\Fp}{{\mathfrak{p}}}
\DeclareFontFamily{OT1}{rsfs}{}
\DeclareFontShape{OT1}{rsfs}{n}{it}{<-> rsfs10}{}
\DeclareMathAlphabet{\curly}{OT1}{rsfs}{n}{it}
\begin{document}
\title{Local multiplicativity of perverse filtrations}
\date{\today}

\author{Zili Zhang}
\address{School of Mathematical Sciences,
Key Laboratory of Intelligent Computing and Applications (Ministry of Education), 
Tongji University, Shanghai 200092, China}
\email{zhangzili@tongji.edu.cn}

\begin{abstract}
    Let $f:S\to C$ be a proper surjective morphism from a smooth K\"ahler surface to a smooth curve.  We show that the local perverse filtration associated with the induced map $S^{[n]}\to C^{(n)}$ is multiplicative on each fiber if and only if $f$ is an elliptic fibration.
\end{abstract}

\maketitle
\tableofcontents

\section{Introduction}
\subsection{Multiplicativity of perverse filtrations and $P=W$}
Throughout this paper, all varieties are over complex number $\BC$ and all cohomology groups are of rational coefficients. 

For a K\"ahler manifold $Y$, the perverse truncation functors $^\Fp\tau_{\le k}$ in the perverse $t$--structure on the bounded derived category $D^b_c(Y)$ of constructible sheaves (see \cite{BBD}) give natural morphisms
\[
^\Fp\tau_{\le k}K\to K,
\]
where $K\in D^b_c(Y)$. Let $f:X\to Y$ be a proper surjective morphism of K\"ahler manifolds with equidimensional fibers. Following \cite{dCHM}, we define
\[
P_kH^*(X)=\textup{Im}\left\{\BH^*(Y,{^\Fp\tau_{\le k+\dim Y}}Rf_*\BQ_X)\to H^*(X)\right\},
\]
and call the increasing filtration
\[
P_0H^*(X)\subset P_1H^*(X)\subset \cdots\subset H^*(X)
\]
\emph{the perverse filtration associated with} $f$. A perverse filtration is called \emph{multiplicative} if 
\begin{equation} \label{0000}
P_kH^d(X)\times P_lH^e(X)\xrightarrow{\cup}P_{k+l}H^{d+e}(X),~~~~k,l,d,e\ge0.
\end{equation}

The perverse filtration associated with a general morphism is \emph{not} always multiplicative, even for proper morphism between smooth algebraic varieties; see \cite[Example 1.5]{Z}. The multiplicativity of perverse filtration (\ref{0000}) is closely related to the $P=W$ phenomenon, which is first observed by de Cataldo, Hausel and Migliorini \cite{dCHM} in the study of Hitchin moduli spaces. Let $C$ be a smooth projective curve of genus $g(C)\ge2$, and let $G$ be a reductive group. Then there is a canonical homoemorphism, called the nonabelian Hodge correspondence, between the moduli space $M_D$ of semistable $G$-Higgs bundles on $C$ and the moduli space $M_B$ of semisimple $G$-representations of the fundamental group $\pi_1(C)$; see \cite{S}. The $P=W$ conjecture asserts that under the identification
\[
H^*(M_D,\BQ)=H^*(M_B,\BQ)
\]
induced by the nonabelian Hodge correspondence, the perverse filtration on $M_D$ associated with the Hitchin fibration matches the Hodge-theoretic weight filtration on $M_B$, \emph{i.e.}
\[
P_kH^*(M_D,\BQ)=W_{2k}H^*(M_B,\BQ)=W_{2k+1}H^*(M_B,\BQ),~~k\ge0.
\]

The $G=\textup{GL}_n$ cases are proved recently by \cite{MS} and \cite{HMMS} independently, and the $\textup{SL}_p$ case for prime number $p$ is proved by \cite{dCMS2}. Since the Hodge-theoretic weight filtration is always multiplicative, the $P=W$ identity forces the perverse filtration to be multiplicative. In fact, the multiplicativity of the perverse filtrations associated with the Hitchin fibrations is the key step in the proofs. 
The $P=W$ type identities are also found in other situations, see \cite{DMV,H,HLSY,Z2,Z4}.

\subsection{Local multiplicativity of Hitchin-type morphisms}
Let $f:X\to Y$ be a proper surjective morphism between K\"ahler manifolds with equidimensional fibers, and let $p$ be a closed point on $Y$. We define an increasing filtration of the cohomology of $F=f^{-1}(p)$, called \emph{the local perverse filtration at} $p$ (or on the fiber $F$), as 
\begin{equation}
P_kH^*(F)=\textup{Im}\{({^\Fp\tau}_{\le k+\dim Y}Rf_*\BQ_X)_p\to (Rf_*\BQ_X)_p\}.    
\end{equation}
Similar to (\ref{0000}), we say the local perverse filtration is \emph{multiplicative at $p$} if 
\begin{equation} 
P_kH^d(F)\times P_lH^e(F)\xrightarrow{\cup}P_{k+l}H^{d+e}(F),~~~~k,l,d,e\ge0.
\end{equation}

Although local perverse filtrations are defined on the cohomology groups of the fibers of the fibration, they do depend on the topology of an analytic neighborhood of the fibers. By the local nature of the perverse sheaves, we make the following folklore conjecture:

\begin{conj}
Let $f:X\to Y$ be a proper flat morphism between smooth algebraic varieties. The perverse filtration associated with $f$ is multiplicative if and only if the local perverse filtrations on all fibers are multiplicative. 
\end{conj}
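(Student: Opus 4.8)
The plan is to factor both multiplicativity conditions through the algebra structure on $Rf_*\BQ_X$ and to interpose a single \emph{sheaf-level} condition between them. The cup product on $X$ is induced by the multiplication morphism $m\colon Rf_*\BQ_X\otimes^{L}Rf_*\BQ_X\to Rf_*\BQ_X$ in $D^b_c(Y)$; global multiplicativity says that $m$ is compatible with the perverse truncations ${}^{\Fp}\tau_{\le k}$ after applying $\BH^*(Y,-)$, while local multiplicativity at $p$ says the same after applying the stalk functor $(-)_p$. The natural intermediary is the assertion that $m$ itself carries ${}^{\Fp}\tau_{\le a}\otimes^{L}{}^{\Fp}\tau_{\le b}$ into ${}^{\Fp}\tau_{\le a+b}$ as a morphism in $D^b_c(Y)$. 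To make this tractable I would apply the relative decomposition theorem to split $Rf_*\BQ_X\cong\bigoplus_i {}^{\Fp}\CH^i(Rf_*\BQ_X)[-i]$ into shifted semisimple perverse sheaves, each ${}^{\Fp}\CH^i$ a direct sum of intersection complexes $\textup{IC}_{Z}(L)$, so that the perverse filtration becomes the partial-sum filtration of this splitting. The failure of multiplicativity in bidegree $(i,j)$ is then measured by the components of $m$ that raise perverse degree past $i+j$; write $\Phi_{>i+j}$ for the resulting morphism in the derived category.

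The key reduction is that local multiplicativity at every closed point is equivalent to the vanishing of all the $\Phi_{>i+j}$, i.e.\ to sheaf-level multiplicativity. One inclusion is formal: sheaf-level multiplicativity yields both the local statement (apply $(-)_p$) and the global statement (apply $\BH^*(Y,-)$ and use the Künneth compatibility of $\cup$ with $m$). For the converse I must show that $\Phi_{>i+j}$ vanishes as a morphism in $D^b_c(Y)$ as soon as it vanishes on every stalk. Naive stalk-detection is \emph{false} here — a nonzero degree-raising self-extension of an intersection complex lies in $\Ext^1(\textup{IC}_{Z}(L),\textup{IC}_{Z}(L))$ yet restricts to $0$ on every stalk — and the resolution is weights: by Saito's theory $m$ underlies a morphism of mixed Hodge modules and is therefore strict for the weight filtration, while the summands $\textup{IC}_{Z}(L)$ produced by the decomposition theorem are pure with weights pinned down by relative Hard Lefschetz. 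A degree-raising component of $m$ would have to increase weight on associated graded and so is forced to vanish; what survives reduces to honest morphisms of the local systems $L$ along the strata, and those \emph{are} detected on stalks. A downward induction on the stratification, using the support and cosupport conditions defining $\textup{IC}$ together with the semisimplicity of pure Hodge modules, then upgrades pointwise vanishing to vanishing of $\Phi_{>i+j}$. This gives the equivalence of local multiplicativity at all points with sheaf-level multiplicativity, and hence the implication from the local hypothesis to the global conclusion.

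To close the equivalence I must prove that global multiplicativity forces each $\Phi_{>i+j}$ to vanish on every stalk, and \textbf{this is where I expect the real difficulty}: a stalk is strictly finer than hypercohomology, so a priori the integrated statement is weaker than the pointwise one. My plan is to localize, replacing $Y$ near a point $p$ by a Stein (or contractible) neighborhood $U$ on which $\BH^*(U,{}^{\Fp}\tau_{\le k}Rf_*\BQ_X)$ computes the local perverse filtration and on which the filtration is the restriction of the global one; properness and the degeneration of the perverse Leray spectral sequence then let one compare the restricted cup product on $f^{-1}(U)$ with the stalk multiplication at $p$. The genuine obstacle is that the restriction $H^*(X)\to (Rf_*\BQ_X)_p$ need not be surjective, so global multiplicativity does not obviously descend to the stalk. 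Overcoming this should require exploiting flatness and the smoothness of $X$ and $Y$ to produce enough global classes — for instance through a relative Hard Lefschetz and primitive-decomposition argument that realizes each local class, or by spreading out a generic stalk class and specializing — so that the weight-controlled obstruction $\Phi_{>i+j}$, already known to be detected on stalks by the lemma above, must vanish once its hypercohomological shadow does. Supplying this surjectivity-type input under the stated hypotheses would complete the equivalence.
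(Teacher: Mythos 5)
First, a point of calibration: the statement you set out to prove is not proved in the paper at all --- it is stated there as a folklore \emph{conjecture}. What the paper actually establishes is the special case of the fibrations $f^{[n]}:S^{[n]}\to C^{(n)}$ (Theorems \ref{4.3} and \ref{4.4}), and even there the implication ``locally multiplicative at all points $\Rightarrow$ multiplicative'' is obtained only under the extra hypothesis that $f$ is a local model in the sense of Theorem \ref{thm0}; moreover both directions are proved by routing through a third, geometric condition ($f$ elliptic) rather than by any direct sheaf-theoretic comparison of the two filtrations. So there is no proof of the general statement to compare yours against, and your proposal must stand on its own. It does not, for two reasons.

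The step that fails outright is the weight argument by which you pass from stalkwise vanishing to vanishing of the obstructions $\Phi_{>i+j}$. You assert that a perverse-degree-raising component of $m$ ``would have to increase weight on associated graded and so is forced to vanish.'' As stated, this assertion makes no use of the local multiplicativity hypothesis; if it were correct it would kill the degree-raising components of $m$ for an \emph{arbitrary} proper morphism with $X$ smooth, proving sheaf-level (hence both global and local) multiplicativity unconditionally. That is false: the paper cites \cite[Example 1.5]{Z} for a proper morphism of smooth varieties whose perverse filtration is not multiplicative, and the paper's own Theorems \ref{thm0} and \ref{4.3} show that for any non-elliptic fibered surface $f:S\to C$ the local perverse filtration of $f^{[n]}$ fails to be multiplicative at every point of $\Delta$. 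The reason the weight yoga is vacuous here is bookkeeping: with $\BQ_X$ pure of weight $0$, each summand ${}^{\Fp}\CH^i(Rf_*\BQ_X)[-i]$ is again pure of weight $0$ \emph{as a complex} (the weight $i$ of the perverse sheaf is exactly cancelled by the shift $[-i]$), while the internal tensor product of two such summands, being a diagonal pullback of an external product, is mixed of weights $\le 0$. Hence every component of $m$, whether or not it raises perverse degree, is a morphism from an object of weights $\le 0$ to an object of weight $0$, and no purity or strictness statement obstructs it. Perversity and weight are decoupled at exactly the point where your argument needs them coupled, and what remains of your ``local $\Rightarrow$ sheaf-level'' step is precisely the stalk-detection problem that your own $\Ext^1$ example shows is genuine.

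The second gap is the one you flag yourself: the direction ``multiplicative $\Rightarrow$ locally multiplicative.'' The difficulty is real and is the one the paper records in the remark following the definition of the local perverse filtration: the restriction $H^*(X)\to H^*(F)$ need not be surjective, so the local filtration is not induced by the global one. Note that the paper's Proposition \ref{2.6} already reduces local multiplicativity at $p$ to multiplicativity of the perverse filtration of $f^{-1}(U)\to U$ for a small analytic neighborhood $U$; what is missing in general is any way to pass from multiplicativity over $Y$ to multiplicativity over $U$ (again a non-surjective restriction), and the paper supplies this for $f^{[n]}$ only via the structural Theorem \ref{thm0}, an input special to Hilbert schemes of fibered surfaces. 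Your proposed substitutes (relative Hard Lefschetz, spreading out and specializing) are not developed and do not obviously produce the needed classes. In sum, one half of your argument rests on a weight claim that is false, and the other half is openly incomplete; neither direction of the conjecture is established.
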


In \cite{Z1}, we showed that for Hitchin-type fibration constructed via Hilbert schemes of points on fibered surfaces, the associated perverse filtration is multiplicative if and only if the fibered surface is an elliptic fibration. In this paper, we will study perverse filtrations on every fibers of such fibrations.

Let $f:S\to C$ be a proper surjective map from a smooth K\"ahler surface to a connected smooth complex curve with connected fibers. Let $S^{[n]}$ be the Douady space of $n$ points on $S$, \emph{i.e.} the moduli of 0-dimensional analytic subvarieties of length $n$ on $S$. 
By taking the support we have a morphism $\pi_n:S^{[n]}\to S^{(n)}$, called the Douady-Barlet morphism, where $S^{(n)}$ is the symmetric product of $n$ points on $S$. Let $f^{[n]}:S^{[n]}\to C^{(n)}$ be the composition of $\pi_n$ and the natural morphism $S^{(n)}\to C^{(n)}$. Let $\nu=(\nu_1,\cdots,\nu_l)$ be a partition of $n$, \emph{i.e.} $\nu_1+\cdots+\nu_l=n$ and $\nu_1\ge\cdots\ge\nu_l>0$. Let $\textit{\textbf{x}}=\nu_1x_1+\cdots+\nu_lx_l$ be a point in $C^{(n)}$, where $x_1,\cdots,x_l$ are distinct points on $C$. Denote the diagonal $\Delta=\{\textit{\textbf{x}}\in C^{(n)}|\nu_1\ge2\}$ where at least two points on $C$ collide. We have

\begin{thm}[Theorem \ref{4.3}]
Let $n\ge 2$, $f:S\to C$, and $f^{[n]}:S^{[n]}\to C^{(n)}$  as above. 
\begin{enumerate}
    \item If $\textbf{x}\not\in \Delta$, then the local perverse filtration at $\textbf{x}$ is multiplicative.
    \item If $\textbf{x}\in\Delta$, then the local perverse filtration at $\textbf{x}$ is multiplicative if and only if $f$ is an elliptic fibration.
\end{enumerate}
\end{thm}

Since being an elliptic fibration is a global property, the multiplicativity of the local perverse filtrations on all the fibers of $f^{[n]}$ can be checked on one of them. More precisely, we have

\begin{thm} [Theorem \ref{4.4}]
Let $n\ge2$, $f:S\to C$ and $f^{[n]}:S^{[n]}\to C^{(n)}$ as above. The following are equivalent.
\begin{enumerate}
\item The morphism $f$ is an elliptic fibration.
\item The local perverse filtration at some point $\textbf{x}\in\Delta$ is multiplicative.
\item The local perverse filtration at all points in $C^{(n)}$ are multiplicative.
\end{enumerate}
In particular, if the perverse filtration associated with $f^{[n]}$ is multiplicative, then the local perverse filtration at all points are multiplicative. When $f$ is a local model in the sense of Theorem \ref{thm0}, then the converse is true.
\end{thm}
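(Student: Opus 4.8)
The plan is to deduce the equivalence of (1)--(3) directly from Theorem \ref{4.3}, and then to handle the two halves of the ``in particular'' clause separately. For the equivalence, first note that $n\ge 2$ forces $\Delta\neq\emptyset$. For (1)$\Rightarrow$(3): if $f$ is an elliptic fibration, then Theorem \ref{4.3}(1) gives multiplicativity at every $\textbf{x}\notin\Delta$, while Theorem \ref{4.3}(2) gives it at every $\textbf{x}\in\Delta$ precisely because $f$ is elliptic; hence the local filtration is multiplicative at all points. The implication (3)$\Rightarrow$(2) is immediate, since $\Delta\neq\emptyset$ supplies a point of $\Delta$ at which (3) applies. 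Finally (2)$\Rightarrow$(1) is exactly Theorem \ref{4.3}(2): multiplicativity at a single point of $\Delta$ already forces $f$ to be elliptic. This closes the cycle (1)$\Rightarrow$(3)$\Rightarrow$(2)$\Rightarrow$(1).

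For the forward half of the last sentence --- global multiplicativity for $f^{[n]}$ implies local multiplicativity at every point --- the idea is to exploit restriction to a fiber. For a closed point $\textbf{x}\in C^{(n)}$ with fiber $F=(f^{[n]})^{-1}(\textbf{x})$, the restriction $i_{\textbf{x}}^*$ together with the canonical natural transformation from hypercohomology to stalk cohomology produces a commutative square relating $\BH^*(C^{(n)},{}^\Fp\tau_{\le k+\dim C^{(n)}}Rf^{[n]}_*\BQ)\to H^*(S^{[n]})$ to $({}^\Fp\tau_{\le k+\dim C^{(n)}}Rf^{[n]}_*\BQ)_{\textbf{x}}\to H^*(F)$. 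Passing to images shows that the restriction $r\colon H^*(S^{[n]})\to H^*(F)$ is a ring homomorphism carrying $P_kH^*(S^{[n]})$ into $P_kH^*(F)$. Thus if one can show that $r$ is surjective and strict onto each piece of the local perverse filtration, then for $\alpha\in P_kH^*(F)$ and $\beta\in P_lH^*(F)$ one lifts to classes on $S^{[n]}$, multiplies using global multiplicativity, and restricts back to conclude $\alpha\cup\beta\in P_{k+l}H^*(F)$.

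The surjectivity and strictness of $r$ for the perverse filtration is the main obstacle, and it is here that the explicit geometry of Hilbert schemes of points on $S$ must enter. The plan is to reduce to the product structure of $f^{[n]}$ over a small analytic neighborhood of $\textbf{x}$ --- the fiber being a product of punctual pieces over the points $x_1,\dots,x_l$ --- and then to combine the decomposition theorem (available in the K\"ahler setting) with a K\"unneth/Leray--Hirsch argument, so that each summand $\CH^{j}\big(({}^\Fp\CH^i)_{\textbf{x}}\big)$ of the local filtration is hit by a global class. Alternatively, one can bypass surjectivity and route through the elliptic characterization: the non-multiplicative classes constructed in the proof of Theorem \ref{4.3}(2) can be arranged to lie in the image of $r$, so that a failure of local multiplicativity at a point of $\Delta$ propagates upward to a failure of global multiplicativity, which is the contrapositive of the forward implication.

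For the converse half, the plan is purely formal given the equivalence: when $f$ is a local model in the sense of Theorem \ref{thm0}, multiplicativity of the local filtrations at all points is equivalent by (1)$\Leftrightarrow$(3) to $f$ being an elliptic fibration, and Theorem \ref{thm0} then upgrades this to multiplicativity of the global perverse filtration associated with $f^{[n]}$. The only direction of Theorem \ref{thm0} used here is ``elliptic $\Rightarrow$ globally multiplicative,'' which is precisely the implication requiring the local-model hypothesis, explaining why the converse is stated under that restriction.
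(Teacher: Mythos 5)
Your treatment of the equivalence (1)$\Leftrightarrow$(2)$\Leftrightarrow$(3) and of the converse half of the final sentence coincides with the paper's proof: the equivalence is read off from Theorem \ref{4.3} (using $\Delta\neq\emptyset$ when $n\ge 2$), and the converse uses the local-model hypothesis exactly where you say it is needed, namely to invoke the ``elliptic $\Rightarrow$ globally multiplicative'' direction of Theorem \ref{thm0}.

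For the forward half (global multiplicativity $\Rightarrow$ local multiplicativity at every point), however, your primary strategy contains a genuine gap, and your fallback recovers only partially. The restriction-map route --- proving that $r\colon H^*(S^{[n]})\to H^*(F)$ is surjective and strict for the perverse filtrations --- cannot work in general: the paper's own Remark 2.5(3) warns that the local perverse filtration on a fiber is \emph{not} induced by $P_\bullet H^*(X)$ via restriction, precisely because the restriction map can fail to be surjective (classes on an analytic neighborhood of a fiber need not extend to all of $S^{[n]}$, e.g.\ when $C$ is compact), and your K\"unneth/Leray--Hirsch sketch does not repair this. The point you miss is that no restriction argument is needed at all: the forward implication is purely formal. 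Global multiplicativity of the perverse filtration associated with $f^{[n]}$ implies that $f$ is an elliptic fibration by Theorem \ref{thm0} (this direction requires no local-model hypothesis), and then (1)$\Rightarrow$(3) gives local multiplicativity at every point; this two-step chain is the paper's entire proof of the forward half. Your ``alternative'' route through the elliptic characterization is essentially this argument, but you burden it with the unproven --- and unnecessary --- claim that the non-multiplicative classes from the proof of Theorem \ref{4.3}(2) can be arranged to lie in the image of $r$. Drop that condition, argue by the implication chain (or equivalently its contrapositive: local failure at some point $\Rightarrow$ not elliptic by Theorem \ref{4.3} $\Rightarrow$ global failure by Theorem \ref{thm0}), and your proof closes and agrees with the paper's.
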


As an application, we provide a systematic framework to study the topology of the fibers of Hitchin fibrations described in Boalch's conjecture. In \cite[Remark 11.3]{B}, Boalch conjectures that if $M$ is a smooth 2 dimensional moduli space of (possibly wildly parabolic) Higgs bundles, then the Hilbert scheme $M_D:=M^{[n]}$ is also a moduli space of Higgs bundles.  Boalch's conjecture is verified for five families of moduli spaces of mild parabolic Higgs bundles in \cite{G}, and the corresponding $P=W$ conjecture are proved in \cite{SZ,Z}. The general case is still open. The understanding of the topology of the Hitchin fibers, which are a priori the compactified Jacobian of spectral curves, may be helpful to find the correct moduli description of $M^{[n]}$ predicted by Boalch's conjecture.

\subsection{Outline}
This paper is organized as follows. In Section 2, after recalling some basic properties of perverse filtrations, we define local perverse filtration on the fibers and develop its properties. In Section 3, we study the cohomology ring of Douady spaces of points on surfaces and  show that the main result of \cite{Z1} holds in the complex analytic setting. In Section 4, we realize fibers $(f^{[n]})^{-1}(\textbf{\textit{x}})$ as a product of the central fibers of the local models. Moreover, such a product decomposition is compatible with the perverse filtrations, and hence proves the main theorem. 

\subsection{Acknowledgements}
I thank Mark de Cataldo and Junliang Shen for many useful conversations.

\section{Perverse filtrations for K\"ahler manifolds}
\subsection{Cohomology and compactly supported cohomology}
Let $Y$ be a K\"ahler manifold. Let $D^b_c(Y)$ be the bounded derived category of constructible sheaves of $\BQ$-vector spaces on $Y$. For any object $\CC\in D^b_c(Y)$, the perverse truncation functor $^\Fp\tau_{\le k}$ defined by the perverse $t$-structure on $D^b_c(Y)$ induces a natural morphism
\[
^\Fp\tau_{\le k}\CC\to\CC.
\]

Let $f:X\to Y$ be a proper surjective morphism between K\"ahler manifolds. Let $r(f)=\dim X\times_YX-\dim X$ be the defect of semismallness of $f$. The morphism
\[
^\Fp\tau_{\le k}(Rf_*\BQ_X[\dim X])\to Rf_*\BQ_X[\dim X]
\]
induces a morphism in cohomology groups
\[
\BH^d\left(^\Fp\tau_{\le k}(Rf_*\BQ_X[\dim X])\right)\to \BH^d(Rf_*\BQ_X[\dim X])=H^{d+\dim X}(X,\BQ).
\]
We define the $P_kH^d(X)$ by
\[
P_kH^d(X):=\textrm{Im}\left\{\BH^{d-\dim X}\left(^\Fp\tau_{\le k-r(f)}(Rf_*\BQ_X[\dim X])\right)\to H^{d}(X,\BQ)\right\},
\]
and call the increasing filtration
\begin{equation}\label{2}
P_0H^d(X)\subset P_1H^d(X)\subset\cdots\subset H^d(X),~~~~d\ge0
\end{equation}
the perverse filtration associated with the map $f$. The shifts of the indices in the definition guarantee that the perverse filtration $P_\bullet H^*(X)$ concentrates at interval $[0,2r(f)]$. When $f$ is a fibration with equidimensional fiber, we have $r(f)=\dim X-\dim Y$, and hence
\[
P_kH^d(X)=\textrm{Im}\left\{\BH^{d}\left(Y,{^\Fp\tau}_{\le k+\dim Y}Rf_*\BQ_X\right)\to H^{d}(X,\BQ)\right\}.
\]
We say that the perverse filtration (\ref{2}) is \emph{multiplicative} if
\[
P_kH^d(X)\cup P_lH^e(X)\subset P_{k+l}H^{d+e}(X),   ~~~~\forall k,l,d,e\ge0
\]
with respect to the cup product. A class $\alpha\in H^d(X,\BQ)$ is called \emph{of perversity} $k$, denoted as $\Fp(\alpha)=k$ if $\alpha\in P_kH^d(X)$ and $\alpha\not\in P_{k-1}H^d(X)$. A basis $\{\beta_1,\cdots,\beta_N\}$ of the total cohomology group $H^*(X)$ is called a \emph{filtered basis} if
\[
P_kH^d(X)=\left\langle\beta_i|\beta_i\in P_kH^d(X),1\le i\le N\right\rangle.
\]

Similarly, we define the perverse filtration on the compactly supported cohomology $H^*_c(Y)$ associated with $f:X\to Y$ as
\[
P_kH^d_c(X):=\textrm{Im}\left\{\BH^{d-\dim X}_c\left(^\Fp\tau_{\le k-r(f)}(Rf_*\BQ_X[\dim X])\right) \to H^d_c(X,\BQ)\right\}.
\]
It follows from the definition that the forgetful map $H^d_c(X)\to H^d(X)$ preserves the perverse filtrations, \emph{i.e.}
\[
\begin{tikzcd}
P_kH_c^d(X)\arrow[r]\arrow[d,hook]& P_kH^d(X)\arrow[d,hook]\\
H_c^d(X)\arrow[r]& H^d(X)
\end{tikzcd}
\]
commutes.

\subsection{Perverse decompositions}
Let $f:X\to Y$ be a proper surjective map between K\"ahler manifolds. If we have a decomposition
\[
Rf_*\BQ_X[\dim X-r(f)]=\bigoplus_{i=0}^{2r(f)}\CP_i[-i],
\]
where the $\CP_i$ are perverse sheaves, then we have a decomposition
\[
H^d(X)=\bigoplus_{i=0}^{2r(f)}\BH^d(\CP_i[-i]).
\]
Any decomposition 
\begin{equation}\label{3}
H^*(X)=\bigoplus_{i=0}^{2r(f)}G_iH^*(X)
\end{equation}
obtained in this way is called a \emph{perverse decomposition associated with $f$}. It follows that
\begin{equation}\label{03}
P_kH^d(X)=\bigoplus_{i=0}^kG_iH^d(X).
\end{equation}
A perverse decomposition (\ref{3}) is called \emph{strongly multiplicative} if
\[
G_kH^d(X)\cup G_l H^e(X)\subset G_{k+l}H^{d+e}(X).
\]
It follows directly from the definition that if there exists a strongly multiplicative perverse decomposition associated with $f:X\to Y$, then the perverse filtration is multiplicative. But the inverse is \emph{not} true in general. If a non-zero class $\alpha\in G_kH^d(X)$, we say $\Fg(\alpha)=k$. A basis $\{\beta_1,\cdots,\beta_N\}$ of $H^*(X)$ is called \emph{adapted to the perverse decomposition} $G_\bullet H^*(X)$ if
\[
G_kH^d(X)=\left\langle\beta_i\mid\beta_i\in G_kH^d(X),1\le i\le N\right\rangle.
\]
A basis which is adapted to any perverse decomposition is a filtered basis.

The forgetful map $H^d_c(X)\to H^d(X)$ preserves any perverse decomposition associated with $f$, \emph{i.e.}
\[
G_iH^d_c(X)\to G_iH^d(X).
\]

Perverse decompositions on cohomology and on compactly supported cohomology behave well under the Poinar\'e duality. In fact, since $Rf_*\BQ_X[\dim X]$ is self-dual with respect to the Verdier duality,  $\CP_i\cong\BD\CP_{2r(f)-i}$, where $\BD$ is the Verdier duality. Therefore, under the Poicar\'e duality
\[
H^d(X)\cong H^{2\dim X-d}_c(X)^\vee,~~~~d\ge 0,
\]
the direct summands of the perverse decomposition (\ref{3}) are identified as
\[
G_iH^d(X)\cong \left(G_{2r(f)-i}H^{2\dim X-d}_c(X)\right)^\vee, ~~~~ i,d\ge 0.
\]

In other word, the perverse filtration (\ref{3}) associated with $f$ is a $G$-decomposition in the sense of \cite[Definition 2.8]{Z1}. More precisely, we have the following counterpart of \cite[Proposition 4.1]{Z1}.

\begin{prop}\label{2.1}
Let $f:X\to Y$ be a proper surjective map between K\"ahler manifolds. Given a decomposition into shifted perverse sheaves
\begin{equation} \label {2.1.1}
Rf_*\BQ_X[\dim X-r(f)]=\bigoplus_{i=0}^{2r(f)}\CP_i[-i],
\end{equation}
where $\CP_i$ are perverse sheaves. Then for any basis $\{\beta_1,\cdots,\beta_N\}$ of $H^*(X)$ adapted to the perverse decomposition (\ref{2.1.1}), the dual basis $\{\beta^1,\cdots,\beta^N\}$ of $H^*_c(X)$ in the sense of Poincar\'e paring is also adapted to the perverse decomposition (\ref{2.1.1}), and moreover, we have $\Fg(\beta_i)+\Fg(\beta^i)=2r(f)$.
\end{prop}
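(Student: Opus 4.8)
The plan is to exploit the self-duality of the decomposition complex $Rf_*\BQ_X[\dim X]$ under Verdier duality, which is precisely the statement $\CP_i\cong\BD\CP_{2r(f)-i}$ recorded just before the proposition. The strategy is to pair a class of fixed perversity degree $\Fg(\beta_i)=i$ against the compactly supported cohomology, and show that the only summand it can pair nontrivially with is the one of complementary degree $2r(f)-i$. Concretely, I would first fix an adapted basis $\{\beta_1,\dots,\beta_N\}$ so that each $\beta_i$ lies in a single summand $G_{\Fg(\beta_i)}H^*(X)$, and let $\{\beta^1,\dots,\beta^N\}$ be the Poincar\'e-dual basis in $H^*_c(X)$ characterized by $\langle\beta_i,\beta^j\rangle=\delta_{ij}$.

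The heart of the argument is an orthogonality statement: under the Poincar\'e pairing, the summand $G_iH^d(X)$ pairs to zero with every $G_jH^{2\dim X-d}_c(X)$ except $j=2r(f)-i$. This follows from the Verdier self-duality identification displayed immediately above the proposition, namely the isomorphism $G_iH^d(X)\cong\big(G_{2r(f)-i}H^{2\dim X-d}_c(X)\big)^\vee$, which says exactly that the pairing restricts to a perfect pairing between the $i$-th summand on cohomology and the $(2r(f)-i)$-th summand on compactly supported cohomology, and is identically zero on the other cross terms. Granting this, if $\beta_i$ has $\Fg(\beta_i)=k$, then $\langle\beta_i,\cdot\rangle$ vanishes on every compactly supported summand except $G_{2r(f)-k}H^*_c(X)$; since $\langle\beta_i,\beta^i\rangle=1\ne0$, the dual class $\beta^i$ must have a nonzero component in $G_{2r(f)-k}H^*_c(X)$, forcing $\Fg(\beta^i)=2r(f)-k$, which is the asserted identity $\Fg(\beta_i)+\Fg(\beta^i)=2r(f)$.

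To conclude that $\{\beta^1,\dots,\beta^N\}$ is itself adapted to the perverse decomposition, I would argue that each $\beta^i$ lies \emph{entirely} in the single summand $G_{2r(f)-\Fg(\beta_i)}H^*_c(X)$, not merely that it has a nonzero component there. This is a counting or nondegeneracy argument: the orthogonality shows that the dual basis vectors for all $\beta_i$ with a given perversity $k$ span precisely the dual of $G_kH^*(X)$ inside $G_{2r(f)-k}H^*_c(X)^\vee$, and since the perfect pairing between $G_kH^*(X)$ and $G_{2r(f)-k}H^*_c(X)$ matches dimensions, the $\beta^i$ with $\Fg(\beta_i)=k$ form a basis of $G_{2r(f)-k}H^*_c(X)$. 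Hence each $\beta^i$ is supported in a single summand, and the dual basis is adapted, as required.

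I expect the main obstacle to be the clean bookkeeping of degrees: one must track the cohomological degree $d$, the perverse index $i$, and the dimension shift $2\dim X$ simultaneously, and verify that the Verdier-duality isomorphism pairs $(d,i)$ with $(2\dim X-d,\,2r(f)-i)$ consistently across all summands so that the dual basis vectors land in the correct single graded piece. Once the orthogonality relation is established precisely from the displayed isomorphism $G_iH^d(X)\cong\big(G_{2r(f)-i}H^{2\dim X-d}_c(X)\big)^\vee$, the remainder is a standard linear-algebra consequence of dual bases with respect to a block-diagonal perfect pairing, and carries over verbatim from the proof of \cite[Proposition 4.1]{Z1} since the only input used there is the Verdier self-duality, which holds equally in the K\"ahler setting.
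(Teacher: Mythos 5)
Your proposal follows essentially the same route as the paper: the paper's own justification of Proposition \ref{2.1} is precisely the Verdier self-duality $\CP_i\cong\BD\CP_{2r(f)-i}$ and the resulting identification $G_iH^d(X)\cong\bigl(G_{2r(f)-i}H^{2\dim X-d}_c(X)\bigr)^\vee$ recorded just before the statement, with the remaining dual-basis linear algebra deferred to \cite[Proposition 4.1]{Z1} --- exactly the orthogonality-plus-nondegeneracy argument you spell out. Your write-up simply makes explicit the block-diagonal perfect-pairing bookkeeping that the paper outsources to that reference, so nothing is missing relative to the paper's own proof.
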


Let 
\begin{equation}\label{Delta}
\Delta_*:H^*(X)\to H^*_c(X)\otimes H^*(X)
\end{equation}
be the $\otimes$--Hom adjoint of the cup product 
\[
\cup:H^*(X)\otimes H^*(X)\to H^*(X).
\]
Then 
\[
\Delta_*(\alpha)=\sum_{i=1}^N\beta^i\otimes\beta_i\alpha
\]
\begin{cor}\label{2.2}
Let $f:X\to Y$ be a proper surjective map between K\"ahler manifolds. Fix a decomposition
\[
Rf_*\BQ_X[\dim X-r(f)]=\bigoplus_{i=0}^{2r(f)}\CP_i[-i],
\]
Let $\{\beta_1,\cdots,\beta_N\}$ of $H^*(X)$ be a basis of $H^*(X)$ adapted to the perverse decomposition $G_\bullet H^*(X)$. Let $\{\beta^1,\cdots,\beta^N\}$ be the dual basis in $H^*_c(X)$. Then  
\[
\Delta_*(1)=\sum_{i=1}^N\beta^i\otimes\beta_i
\]
such that
\[
\Fg(\beta_i)+\Fg(\beta^i)=2r(f)
\]
for all $1\le i\le N$.
\end{cor}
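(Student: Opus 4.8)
The plan is to derive Corollary \ref{2.2} as a direct specialization of Proposition \ref{2.1}, applied to the unit class $1 \in H^0(X)$. The corollary is essentially the statement of Proposition \ref{2.1} evaluated at $\alpha = 1$ in the formula for $\Delta_*$, together with the perversity-additivity relation $\Fg(\beta_i) + \Fg(\beta^i) = 2r(f)$, which Proposition \ref{2.1} already supplies. So the work is to assemble the pieces rather than prove anything genuinely new.

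First I would recall the formula for $\Delta_*$ stated in display (\ref{Delta}) and the line following it: for the adjoint $\Delta_*$ of the cup product, expanded in terms of a basis $\{\beta_i\}$ of $H^*(X)$ with Poincaré-dual basis $\{\beta^i\}$ of $H^*_c(X)$, we have
\[
\Delta_*(\alpha) = \sum_{i=1}^N \beta^i \otimes \beta_i\alpha.
\]
This is the standard Künneth expansion of the pushforward of the diagonal class, written with respect to dual bases. The key input is that $\{\beta^1,\dots,\beta^N\}$ is genuinely the Poincaré dual basis, so that the pairing $\langle \beta^j, \beta_i \rangle = \delta_{ij}$ holds and the coefficient of $\beta^i$ in $\Delta_*(\alpha)$ is exactly $\beta_i \alpha$.

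Next I would set $\alpha = 1$, the unit in $H^0(X)$. Since $\beta_i \cdot 1 = \beta_i$, the general formula collapses to
\[
\Delta_*(1) = \sum_{i=1}^N \beta^i \otimes \beta_i,
\]
which is the first displayed assertion of the corollary. For the second assertion, I would invoke Proposition \ref{2.1} directly: given the fixed perverse decomposition (\ref{2.1.1}) and a basis $\{\beta_1,\dots,\beta_N\}$ adapted to it, Proposition \ref{2.1} states that the dual basis $\{\beta^1,\dots,\beta^N\}$ is also adapted to the perverse decomposition, and that $\Fg(\beta_i) + \Fg(\beta^i) = 2r(f)$ for each $i$. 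This is verbatim the perversity relation claimed in the corollary, so no additional argument is required.

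**The main (and only real) obstacle** is verifying that the hypotheses of Proposition \ref{2.1} apply as stated — namely that the chosen basis is adapted to the \emph{same} fixed decomposition used to define the grading, so that the indices $\Fg(\beta^i)$ are computed with respect to the Poincaré-dual perverse decomposition on $H^*_c(X)$. This is already guaranteed by the self-duality $\CP_i \cong \BD\CP_{2r(f)-i}$ noted in the excerpt and formalized in Proposition \ref{2.1}; there is no genuine difficulty beyond bookkeeping the index shift $i \mapsto 2r(f) - i$ under Verdier duality. Thus the corollary follows immediately, and I expect the proof to consist of two sentences: cite the $\Delta_*$ formula, specialize to $\alpha = 1$, and invoke Proposition \ref{2.1} for the perversity additivity.
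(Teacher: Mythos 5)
Your proposal is correct and matches the paper's own (implicit) argument exactly: the paper treats Corollary \ref{2.2} as an immediate consequence of the formula $\Delta_*(\alpha)=\sum_i\beta^i\otimes\beta_i\alpha$ specialized at $\alpha=1$, combined with the duality statement $\Fg(\beta_i)+\Fg(\beta^i)=2r(f)$ from Proposition \ref{2.1}. Nothing further is needed.
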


\begin{rmk}
When $X$ is smooth a projective variety, we have $H^*_c(X)=H^*(X)$ and that $\Delta_*$ is the push-forward along the diagonal embedding. Proposition \ref{2.1} and Corollary \ref{2.2} recover \cite[Propositon 3.1]{Z} and \cite[Proposition 3.8]{Z}, respectively. 
\end{rmk}

\subsection{Local perverse filtrations on fibers}
Let $f:X\to Y$ be a proper morphism between K\"ahler manifolds, and let $\iota:\{p\}\to Y$ be a closed point on $Y$. 
\[
\begin{tikzcd}
F\arrow[r,"j"]\arrow[d,"g"]& X\arrow[d,"f"]\\
\{p\}\arrow[r,"\iota"]& Y
\end{tikzcd}
\]
We define a filtration of the cohomology of $F=f^{-1}(p)$, called the local perverse filtration at point $p$ (or on fiber $F$), as follows. Starting from the natural morphism
\[
{^\Fp\tau}_{\le k}(Rf_*\BQ_X[\dim X])\to Rf_*\BQ_X[\dim X],
\]
we have
\[
\iota^*{^\Fp\tau}_{\le k}(Rf_*\BQ_X[\dim X])\to \iota^*Rf_*\BQ_X[\dim X].    
\]
By the proper base change theorem, 
\begin{equation}
\iota^*Rf_*\BQ_X[\dim X]\cong Rg_*j^*\BQ_X[\dim X]=Rg_*\BQ_F[\dim X].
\end{equation}
So there is a natural map of hypercohomology groups
\begin{equation}
    \BH^d\left(\iota^*{^\Fp\tau_{\le k}}(Rf_*\BQ_X[\dim X])\right)\to \BH^d(Rg_*\BQ_F[\dim X])=H^{d+\dim X}(F,\BQ).
\end{equation}

\begin{defn}
Let $f:X\to Y$ be a proper morphism between K\"ahler manifolds. Let $\iota:\{p\}\to Y$ be a point on $Y$, and let $F:=f^{-1}(p)$ be the fiber over $p$. Define
\begin{equation} \label{fiber}
P_kH^d(F):=\textrm{Im} \left\{\BH^{d-\dim X}(\iota^*{^\Fp\tau_{\le {k-r(f)}}}Rf_*\BQ_X[\dim X])\to H^d(F,\BQ)\right\}.
\end{equation}
The increasing filtration
\[
P_0H^d(F)\subset P_1H^d(F)\cdots\subset H^d(F) 
\]
is called the local perverse filtration at $p$  (or on the fiber $F$) associated with the map $f$. When $f$ is a fibration with equidimensional fibers, the defect $r(f)$ equals the fiber dimension. In this case, the local perverse filtration $P_kH^*(F)$ at any fiber $F$ is concentrated at $0\le k \le 2\dim F$, and
\[
P_kH^*(F)=\textup{Im}\{({^\Fp\tau}_{\le k+\dim Y}Rf_*\BQ_X)_p\to (Rf_*\BQ_X)_p\}.    
\]
We say that the local perverse filtration at $p$ associated with $f$ is multiplicative, or simply \emph{locally multiplicative at $p$},  if the cup product
\[
\cup_F:H^*(F)\otimes H^*(F)\to H^*(F)
\]
satisfies 
\[
P_kH^i(F)\cup_F P_lH^j(F)\subset P_{k+l}H^{i+j}(F), ~~\forall i,j,k,l\ge0.
\]
The perverse filtration associated with $f$ is called \emph{locally multiplicative} if it is locally multiplicative at all points on $Y$.
\end{defn}

\begin{rmk}
\begin{enumerate}
    \item If we have a perverse decomposition
    \[
    Rf_*\BQ_X[\dim X-r(f)]\cong \bigoplus_{i=0}^{2\dim F}\CP_i[-i]
    \]
    where $\CP_i$ are perverse sheaves, then the perverse filtration on fiber $F$ is computed by
    \begin{equation}\label{abc}
    P_kH^*(F)\cong\bigoplus_{i=0}^{k}\BH^*(\{p\},\iota^*\CP_i).
    \end{equation}
    \item The local perverse filtration on fiber $F=f^{-1}(p)$ associated with the map $f:X\to Y$ is different from the perverse filtration associated with $F\to \{p\}$.  
    \item The local perverse filtration on fiber $F$ associated with the map $f$ is \emph{not} induced by $P_\bullet H^*(X)$ via the natural restriction $H^*(X)\to H^*(F)$, especially when the restriction map is not surjective. 
\end{enumerate}
\end{rmk}

The following proposition implies the local nature of the local perverse filtration on fibers.

\begin{prop} \label{2.6}
Let $f:X\to Y$ be a proper surjective morphism between K\"ahler manifolds. Let $p$ be a point on $Y$, and let $F=f^{-1}(p)$ be the fiber over $p$. Then there exists a neighborhood $U$ of $p$, such that the 
restriction map $H^*(f^{-1}(U))\to H^*(F)$ is an isomorphism. For such $U$, the perverse filtration associated with $f|_{U}: f^{-1}(U)\to U$ restricts isomorphically to the local perverse filtration on $F$, i.e.
\[
P_kH^*(f^{-1}(U))\xlongrightarrow{\sim} P_kH^*(F), ~~\forall k\ge0.
\]
\end{prop}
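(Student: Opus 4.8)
The plan is to produce a single neighborhood $U$ of $p$ that is ``distinguished'' for the constructible-sheaf theory of $f$, and to verify both assertions against it. First I would fix a Whitney stratification $\CS$ of $Y$, which exists in the complex analytic category for the proper holomorphic map $f$ by Thom--Mather theory, such that $Rf_*\BQ_X$ is $\CS$-constructible. Since the full subcategory $D^b_\CS(Y)$ of $\CS$-constructible complexes is stable under the perverse truncation functors, every ${}^\Fp\tau_{\le m}(Rf_*\BQ_X[\dim X])$ is again $\CS$-constructible. I would then take $U$ to be a conical (distinguished) neighborhood of $p$ adapted to $\CS$. The essential property of such a $U$ is the \emph{cone formula}: for every $\CS$-constructible complex $\CC$ on $Y$ the canonical restriction-to-the-stalk map
\[
\BH^*(U,\CC|_U)\xrightarrow{\ \sim\ }\BH^*(\iota^*\CC)=\CC_p
\]
is an isomorphism, since $U$ contracts onto $p$ along the cone lines compatibly with the strata. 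This map is natural in $\CC$, and a single $U$ works simultaneously for $Rf_*\BQ_X[\dim X]$ and for all of its perverse truncations.

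With $U$ fixed, the first assertion is immediate. Applying the cone formula to $\CC=Rf_*\BQ_X[\dim X]$, the right-hand side is $H^{*+\dim X}(F)$ by proper base change, while the left-hand side is $H^{*+\dim X}(f^{-1}(U))$: open base change along the Cartesian square of the open immersions $f^{-1}(U)\hookrightarrow X$ and $U\hookrightarrow Y$ gives $(Rf_*\BQ_X)|_U=R(f|_U)_*\BQ_{f^{-1}(U)}$, so that $\BH^*\!\big(U,(Rf_*\BQ_X)|_U\big)=H^*(f^{-1}(U))$. Hence $H^*(f^{-1}(U))\xrightarrow{\sim}H^*(F)$.

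For the compatibility with perverse filtrations I would use that the open restriction $j^*$ along $j:U\hookrightarrow Y$ is $t$-exact for the perverse $t$-structure, whence it commutes with truncation, $j^*{}^\Fp\tau_{\le m}\cong{}^\Fp\tau_{\le m}j^*$. Combined with open base change this identifies $({}^\Fp\tau_{\le m}Rf_*\BQ_X)|_U$ with ${}^\Fp\tau_{\le m}\big(R(f|_U)_*\BQ_{f^{-1}(U)}\big)$, so $P_\bullet H^*(f^{-1}(U))$ is computed by the images of $\BH^*\!\big(U,({}^\Fp\tau_{\le m}Rf_*\BQ_X[\dim X])|_U\big)\to H^{*+\dim X}(f^{-1}(U))$. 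Restricting the natural morphism ${}^\Fp\tau_{\le m}(Rf_*\BQ_X[\dim X])\to Rf_*\BQ_X[\dim X]$ to $U$ and mapping to the stalk at $p$ produces a square
\[
\begin{tikzcd}
\BH^*\!\big(U,({}^\Fp\tau_{\le m}Rf_*\BQ_X[\dim X])|_U\big)\arrow[r]\arrow[d,"\wr"]& \BH^*\!\big(U,(Rf_*\BQ_X[\dim X])|_U\big)\arrow[d,"\wr"]\\
\BH^*\!\big(\iota^*{}^\Fp\tau_{\le m}Rf_*\BQ_X[\dim X]\big)\arrow[r]& \BH^*\!\big(\iota^*Rf_*\BQ_X[\dim X]\big)
\end{tikzcd}
\]
whose vertical arrows are the cone-formula isomorphisms for the two $\CS$-constructible complexes and which commutes by the naturality of restriction to the stalk. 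The right vertical arrow is the isomorphism $H^*(f^{-1}(U))\cong H^*(F)$ of the previous paragraph; since both verticals are isomorphisms, it carries the image of the top row isomorphically onto the image of the bottom row, and these are exactly $P_kH^*(f^{-1}(U))$ and $P_kH^*(F)$ with $k=m+r(f)$. Here I use $r(f|_U)=r(f)$ to match the normalization shifts; this is automatic for fibrations with equidimensional fibers (the case of interest, where $r(f)=\dim X-\dim Y=\dim f^{-1}(U)-\dim U$).

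I expect the main obstacle to be the first paragraph, namely justifying the cone formula $\BH^*(U,\CC|_U)\cong\CC_p$ for a distinguished neighborhood $U$ in the K\"ahler (complex analytic) setting rather than the algebraic one; this rests on the local conical structure of complex analytic Whitney stratified sets and on choosing $U$ within the associated fundamental system of neighborhoods. Once this topological input is secured, the remaining sheaf-theoretic steps ($t$-exactness of $j^*$, open and proper base change, and commutativity of the comparison square) are formal.
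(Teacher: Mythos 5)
Your proposal is correct, and its overall skeleton (find a neighborhood $U$ computing stalks, then match the two filtrations over it) is the same as the paper's, but the mechanism you use for the filtration comparison is genuinely different. The paper chooses $U$ more cheaply — only requiring that the stalks of the finitely many constructible sheaves $R^df_*\BQ_X$ be computed by $H^d(f^{-1}(U))$, via the colimit description of stalks — and then, for the perverse filtration, invokes the (K\"ahler) decomposition theorem over $U$ to write $Rf_*\BQ_{f^{-1}(U)}[\dim X-r(f)]=\bigoplus_i\CP_i[-i]$; the restriction-to-stalk map respects this direct sum, so the fact that the \emph{total} map is an isomorphism forces each summand map $\BH^*(U,\CP_i)\to\BH^*(\{p\},\iota^*\CP_i)$ to be one, and the partial sums are exactly the two filtrations. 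You instead avoid the decomposition theorem entirely: you demand more of $U$ (a distinguished conical neighborhood for a Whitney stratification adapted to $Rf_*\BQ_X$, so that the cone formula $\BH^*(U,\CC|_U)\cong\CC_p$ holds uniformly for \emph{all} $\CS$-constructible $\CC$), apply it directly to the perverse truncations ${}^\Fp\tau_{\le m}Rf_*\BQ_X$ (which stay $\CS$-constructible), and use $t$-exactness of open restriction plus naturality of the comparison square. The trade-off: the paper's direct-sum trick makes uniformity in $\CC$ come for free, at the price of citing the decomposition theorem (a standing tool in that paper anyway); your argument is more self-contained and would work for any proper map with constructible pushforward, independent of any splitting, but rests on the stronger topological input of local conical structure. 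You also handle explicitly the normalization issue $r(f|_U)$ versus $r(f)$, which the paper glosses over by writing $r(f)$ in its decomposition over $U$; your observation that the two agree in the equidimensional-fibration case (the case actually used later) is the right way to square the indices.
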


\begin{proof}
For $d\ge0$, the higher pushforward sheaf $R^df_*\BQ_X$ is the sheafification of the presheaf
\[
V\mapsto H^d(f^{-1}(V),\BQ),
\]
so $(R^df_*\BQ_X)_p=\varinjlim H^d(f^{-1}(V))$. Since $R^df_*\BQ_X$ is a constructible sheaf, there exists a neighborhood $U$ of $p$ computing the stalk, \emph{i.e.} 
\[
(R^df_*\BQ_X)_p=H^d(f^{-1}(U),\BQ).
\]
By the proper base change theorem, 
\[
(R^df_*\BQ_X)_p\xlongrightarrow{\sim}H^d(F,\BQ).
\]
Therefore, the restriction map
\begin{equation}\label{rest0}
H^d(f^{-1}(U),\BQ)\rightarrow H^d(F,\BQ),~~\forall d\ge0
\end{equation}
is an isomorphism. To see that this isomorphism preserves perverse filtrations, we fix a perverse decomposition 
\begin{equation} \label{rest}
Rf_*\BQ_{f^{-1}(U)}[\dim X-r(f)]=\bigoplus_{i=0}^{2\dim F}\CP_{i}[-i].
\end{equation}
Then the restriction map (\ref{rest0}), or equivalently
\[
\BH^{*}(U,Rf_*\BQ_{f^{-1}(U)})\to \BH^{*}(\{p\},\iota^*Rf_*\BQ_{f^{-1}(U)})
\]
is decomposed as the direct sum of
\begin{equation}\label{rest1}
\BH^{*}(U,\CP_i)\to \BH^{*}(\{p\},\iota^*\CP_i).
\end{equation}
Therefore, (\ref{rest1}) is an isomorphism for each $i$, and hence 
\[
\bigoplus_{i=0}^k\BH^{*}(U,\CP_i)\xlongrightarrow{\sim} \bigoplus_{i=0}^k\BH^{*}(\{p\}, \iota^*\CP_i),
\]
\emph{i.e.}
\[
P_kH^*(f^{-1}(U))\xlongrightarrow{\sim} P_kH^*(F), ~~\forall k\ge0.
\]
\end{proof}

Since restriction map in cohomology is map of algebras, multiplicativity of the local perverse filtration on the fiber $F$ is equivalent to the multiplicativity of the perverse filtration associated with $f:f^{-1}(U)\to U$ for sufficiently small analytic neighborhood $U$. That's why we call it local multiplicativity.

\subsection{Functoriality of local perverse filtrations on fibers}
In this section we prove some functoriality results on local perverse filtration on fibers. To simplify notation, we denote $F_f(y)$ the fiber over $y$ of a given map $f:X\to Y$, \emph{i.e.} $F_f(y)=f^{-1}(y)$. 
\begin{prop}\label{2.7}
Let $f_i:X_i\to Y_i$, $1\le i\le n$ be proper morphisms of K\"ahler manifolds. Let $y_i\in Y_i$ be closed points, and $\textbf{\textit{y}}=(y_1,\cdots,y_n)\in Y_1\times\cdots\times Y_n$. Let 
$f:X_1\times\cdots\times X_n\to Y_1\times\cdots\times Y_n$ be the product. Then under the isomorphism
\[
F_{f}(\textbf{\textit{y}})=F_{f_1}(y_1)\times \cdots\times F_{f_n}(y_n),
\]
the local perverse filtrations on the fibers satisfy the K\"unneth formula, i.e.
\[
P_kH^*(F_{f}(\textbf{\textit{y}}))=\left\langle \alpha_1\boxtimes\cdots\boxtimes\alpha_n\mid \alpha_i\in P_{k_i}H^*(F_{f_i}(y_i)),\,\sum k_i=k\right\rangle.
\]
In particular, the local perverse filtration on the fiber $F_{f}(\textbf{\textit{y}})$ is multiplicative if and only if the local perverse filtrations on the fibers $F_{f_i}(y_i)$ are all multiplicative.
\end{prop}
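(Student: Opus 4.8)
The plan is to reduce the statement to a Künneth-compatibility property of perverse decompositions and then extract the multiplicativity consequence. First I would choose, for each $i$, a perverse decomposition
\[
Rf_{i*}\BQ_{X_i}[\dim X_i - r(f_i)] = \bigoplus_{a=0}^{2\dim F_{f_i}(y_i)} \CP^{(i)}_a[-a].
\]
The key input is that the external tensor product of the $Rf_{i*}\BQ_{X_i}$ computes $Rf_*\BQ_{X_1\times\cdots\times X_n}$ via the Künneth formula, and that $\CP\boxtimes\CQ$ is perverse when $\CP,\CQ$ are (this holds because the external product of shifted constructible sheaves on smooth spaces is perverse, the middle perversity being additive under products). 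Hence
\[
Rf_*\BQ_{\prod X_i}\Big[\dim\textstyle\prod X_i - r(f)\Big] = \bigoplus_{k}\ \bigoplus_{\sum a_i = k}\ \big(\CP^{(1)}_{a_1}\boxtimes\cdots\boxtimes\CP^{(n)}_{a_n}\big)[-k]
\]
is a genuine perverse decomposition associated with $f$, using $r(f)=\sum_i r(f_i)$ and $\dim\prod X_i = \sum_i \dim X_i$.

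Next I would pull back to the fiber. Applying the formula (\ref{abc}) for the local perverse filtration in terms of a perverse decomposition, and using that $\iota^*$ for the product point $\textbf{\textit{y}}$ factors as the external product of the individual point-restrictions $\iota_i^*$, I get
\[
P_kH^*(F_f(\textbf{\textit{y}})) \cong \bigoplus_{\sum a_i \le k}\ \BH^*\big(\{y_1\},\iota_1^*\CP^{(1)}_{a_1}\big)\otimes\cdots\otimes\BH^*\big(\{y_n\},\iota_n^*\CP^{(n)}_{a_n}\big).
\]
Identifying each tensor factor with $G_{a_i}H^*(F_{f_i}(y_i))$ under the proper base change isomorphism and the Künneth isomorphism $H^*(F_f(\textbf{\textit{y}}))\cong\bigotimes_i H^*(F_{f_i}(y_i))$ yields exactly the claimed description of $P_kH^*(F_f(\textbf{\textit{y}}))$ as the span of external products $\alpha_1\boxtimes\cdots\boxtimes\alpha_n$ with $\sum_i \Fp(\alpha_i)\le k$. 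The compatibility of the two Künneth isomorphisms (one on the derived-category level, one on cohomology) is a diagram chase that I would state carefully but not belabor.

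For the multiplicativity equivalence, the cup product on $H^*(F_f(\textbf{\textit{y}}))$ is the external tensor product of the cup products on the factors: $(\alpha_1\boxtimes\cdots)\cup(\alpha_1'\boxtimes\cdots)=\pm(\alpha_1\cup\alpha_1')\boxtimes\cdots$. If each local filtration on $F_{f_i}(y_i)$ is multiplicative, then $\Fp(\alpha_i\cup\alpha_i')\le\Fp(\alpha_i)+\Fp(\alpha_i')$, and summing over $i$ gives multiplicativity on the product, via the Künneth description just established. Conversely, to see that multiplicativity on the product forces it on each factor, I would restrict to classes supported on a single factor: embedding $H^*(F_{f_i}(y_i))$ into $H^*(F_f(\textbf{\textit{y}}))$ by tensoring with the fundamental classes $1\in P_0H^0$ of the other fibers, this embedding preserves perversity by the Künneth formula, so a failure of multiplicativity on factor $i$ would produce a failure on the product.

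I expect the main obstacle to be the compatibility of the external-product perverse decomposition with the proper base change and Künneth identifications on the fiber — that is, verifying that restricting the boxed perverse sheaves to the product point $\textbf{\textit{y}}$ and taking hypercohomology genuinely reproduces the tensor product of the individual fiber filtrations, with matching perversity indices. The sign bookkeeping in the graded cup product is routine, and the perversity of external products of perverse sheaves on smooth Kähler manifolds is standard, so the real care lies in the functoriality diagram relating $\iota^*$, the Künneth isomorphism, and the decomposition (\ref{2.1.1}).
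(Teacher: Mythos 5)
Your proposal is correct and follows essentially the same route as the paper: the paper deduces the K\"unneth formula for local perverse filtrations by citing \cite[Proposition 2.1]{Z} and taking stalks at $\textbf{\textit{y}}$, and your argument via external products of perverse decompositions (perversity of $\boxtimes$-products of perverse sheaves, additivity of $r(f)$, then restriction to the point and proper base change) is precisely the mechanism underlying that citation, modulo the cosmetic point that the decomposition indices should run over $[0,2r(f_i)]$ rather than $[0,2\dim F_{f_i}(y_i)]$ for general proper maps. Your multiplicativity equivalence --- from the factors to the product via the K\"unneth description, and from the product to each factor via the unit-class embedding $\gamma_i\mapsto 1\otimes\cdots\otimes\gamma_i\otimes\cdots\otimes 1$, which preserves the filtration --- is exactly the paper's argument.
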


\begin{proof}
The K\"unneth formula follows from taking the stalk at $\textbf{\textit{y}}$ and induction in \cite[Proposition 2.1]{Z}. To see the equivalence of multiplicativity of perverse filtrations, we first note that the inclusion
\begin{align*}
H^*(F_{f_i}(y_i))&\hookrightarrow \bigotimes_{i=1}^n H^*(F_{f_i}(y_i))=H^*(F_f(\textbf{\textit{y}}))\\
\gamma_i&\mapsto 1\otimes\cdots\otimes\gamma_i\otimes\cdots\otimes1
\end{align*}
is a subsuperalgebra which preserves the local perverse filtration. So if $P_\bullet H^*(F_f(\textbf{\textit{y}}))$ is multiplicative, then $P_\bullet H^*(F_{f_i}(y_i))$ are all multiplicative. Conversely, if all the local perverse filtrations $P_\bullet H^*(F_{f_i}(y_i))$ are all multiplicative, $P_\bullet H^*(F_f(\textbf{\textit{y}}))$ is multiplicative by the K\"unneth formula.
\end{proof}

For a complex manifold $X$, Let $X^{(n)}=X^{\times n}/\mathfrak{S}_n$ be the $n$-fold symmetric product of $X$. Elements in $X^{(n)}$ are written additively as $\textbf{\textit{x}}=x_1+\cdots+x_n$, where $x_i\in X$ for $1\le i\le n$. For any map $f:X\to Y$, the $\mathfrak{S}_n$-equivariant quotient of $f^n:X^{\times n}\to Y^{\times n}$ yields a map $f^{(n)}:X^{(n)}\to Y^{(n)}$. For simplicity, for $\textbf{\textit{y}}\in Y^{(n)}$ we denote 
\[
F^{(n)}(\textbf{\textit{y}}):=F_{f^{(n)}}(\textbf{\textit{y}}).
\]

\begin{prop} \label{2.8}
Let $f:X\to Y$ be a proper map between K\"ahler manifolds. Let $f^{(n)}:X^{(n)}\to Y^{(n)}$ be the induced map. Let $\textbf{y}$ be a point in $Y^{(n)}$.
\begin{enumerate}
    \item If $\textbf{y}=ny$ for some $y\in Y$, the cohomology group
    \[
    H^*(F^{(n)}(ny))=\textup{Sym}^n H^*(F_f(y))\]
    is the $n$-th super-symmetric power, and the local perverse filtration on $H^*(F^{(n)}(ny))$ is obtained as the $\mathfrak{S}_n$-descent of the perverse filtration on $H^*(F_{f^n}(y,\cdots,y))$.
    \item For a general partition $\nu=(\nu_1,\dots,\nu_l)$ of $n$, and a point $\textbf{y}=\nu_1y_1+\cdots+\nu_ly_l$  in $Y^{(n)}$ with distinct $y_i$'s, the fiber over $\textbf{y}$ is a product
    \[
    F^{(n)}(\nu_1y_1+\cdots+\nu_ly_l)=F^{(\nu_1)}(\nu_1y_1)\times\cdots\times F^{(\nu_l)}(\nu_ly_l)
    \]
    and the local perverse filtrations satisfy the K\"unneth formula, where the local perverse filtration on $H^*(F^{(\nu_i)}(\nu_iy_i))$ is associated with $f^{(\nu_i)}$. 
\end{enumerate}
\end{prop}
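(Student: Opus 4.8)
The plan is to prove the two parts by different mechanisms: Part (1) by $\mathfrak{S}_n$-equivariant descent along the finite quotient $X^{\times n}\to X^{(n)}$, and Part (2) by the local product structure of symmetric products together with the already-established K\"unneth formula. For Part (1), I would first identify the fiber geometrically. The preimage of $ny$ under the quotient $q_Y\colon Y^{\times n}\to Y^{(n)}$ is the single diagonal point $(y,\dots,y)$, so a point of $F^{(n)}(ny)$ is an unordered configuration $x_1+\cdots+x_n$ with all $x_i\in F_f(y)$; hence $F^{(n)}(ny)=\textup{Sym}^nF_f(y)=F_f(y)^{\times n}/\mathfrak{S}_n$. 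With $\BQ$-coefficients the functor $(-)^{\mathfrak{S}_n}$ is exact, so $H^*(\textup{Sym}^nF_f(y))=\bigl(H^*(F_f(y))^{\otimes n}\bigr)^{\mathfrak{S}_n}=\textup{Sym}^nH^*(F_f(y))$, the super-symmetric power with Koszul signs coming from the cohomological grading.

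The substance is the compatibility with the perverse filtration. I would work in the commutative square formed by $f^n$, $f^{(n)}$ and the quotient maps $q_X,q_Y$, using the identity $\BQ_{X^{(n)}}=(Rq_{X*}\BQ_{X^{\times n}})^{\mathfrak{S}_n}$ and the relation $Rf^{(n)}_*Rq_{X*}=Rq_{Y*}Rf^n_*$. Since $q_Y$ is finite, $Rq_{Y*}$ is $t$-exact for the perverse $t$-structure and therefore commutes with ${}^\Fp\tau_{\le k}$; combining this with exactness of $(-)^{\mathfrak{S}_n}$ gives an $\mathfrak{S}_n$-equivariant identification
\[
{}^\Fp\tau_{\le k}\,Rf^{(n)}_*\BQ_{X^{(n)}}=\bigl(Rq_{Y*}\,{}^\Fp\tau_{\le k}\,Rf^n_*\BQ_{X^{\times n}}\bigr)^{\mathfrak{S}_n}
\]
(after the self-dual shifts, which match because $\dim X^{(n)}=\dim X^{\times n}$ and $r(f^{(n)})=r(f^n)$). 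Taking the stalk at $ny$, whose unique preimage $(y,\dots,y)$ is fixed by all of $\mathfrak{S}_n$, and passing to images then identifies $P_kH^*(F^{(n)}(ny))$ with $\bigl(P_kH^*(F_{f^n}(y,\dots,y))\bigr)^{\mathfrak{S}_n}$, which is exactly the asserted $\mathfrak{S}_n$-descent.

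For Part (2) I would exploit that, because the $y_i$ are distinct, the symmetric product is locally a product near $\textbf{y}$. Choosing pairwise disjoint neighborhoods $V_i\ni y_i$ in $Y$, a sufficiently small neighborhood of $\textbf{y}$ in $Y^{(n)}$ is biholomorphic to $V_1^{(\nu_1)}\times\cdots\times V_l^{(\nu_l)}$, over which $f^{(n)}$ is identified with the product of the maps $f^{(\nu_i)}\colon (f^{-1}(V_i))^{(\nu_i)}\to V_i^{(\nu_i)}$; correspondingly the distinctness of the $y_i$ forces exactly $\nu_i$ configuration points to lie over $y_i$, giving $F^{(n)}(\textbf{y})=\prod_i\textup{Sym}^{\nu_i}F_f(y_i)=\prod_iF^{(\nu_i)}(\nu_iy_i)$. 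By Proposition \ref{2.6} the local perverse filtration on the fiber coincides with the perverse filtration of $f^{(n)}$ restricted to this neighborhood, and by the product structure together with the K\"unneth formula of Proposition \ref{2.7} the latter is the K\"unneth product of the local perverse filtrations of the factors $f^{(\nu_i)}$.

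I expect the main difficulty to lie entirely in Part (1), in making the descent rigorous. Two points need care. First, one must check that the defect-of-semismallness shifts match, i.e. $r(f^{(n)})=r(f^n)=n\,r(f)$, which follows from $\dim\bigl(X^{(n)}\times_{Y^{(n)}}X^{(n)}\bigr)=n\dim(X\times_YX)$, so that the filtration index $k$ is genuinely preserved under the identification. Second, since $X^{(n)}$ and $Y^{(n)}$ are in general orbifolds rather than manifolds, the perverse-sheaf constructions must be read through the equivariant quotient from the genuine K\"ahler manifolds $X^{\times n},Y^{\times n}$; once the $t$-exactness of $Rq_{Y*}$ and the exactness of $(-)^{\mathfrak{S}_n}$ are in hand, the remaining stalk computation at the maximally degenerate point $ny$—where the full symmetric group appears as stabilizer—is only bookkeeping, but it is the crux of the argument.
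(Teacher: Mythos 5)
Your proof is correct and takes essentially the same route as the paper: part (2) is the paper's argument (disjoint neighborhoods $U_i$, the local product structure of $f^{(n)}$ over $U_1^{(\nu_1)}\times\cdots\times U_l^{(\nu_l)}$, then Proposition \ref{2.7}), while for part (1) the paper simply cites \cite[Lemma 2.9]{Z} and takes the stalk at $ny$ --- your equivariant-descent computation (perverse $t$-exactness of the finite pushforward $Rq_{Y*}$, exactness of $(-)^{\mathfrak{S}_n}$ over $\BQ$, then the stalk at the unique preimage $(y,\dots,y)$) is precisely the content behind that citation. Your additional appeal to Proposition \ref{2.6} in part (2) is harmless but unnecessary, since Proposition \ref{2.7} already computes the local (stalk-level) filtrations and perverse truncation commutes with restriction to the open product neighborhood.
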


\begin{proof}
(1) follows from \cite[Lemma 2.9]{Z} by taking the stalk at $ny$. To see (2), let $\{U_i\}_{i=1}^l$ be disjoint analytic neighborhoods of $y_i$ in $Y$. Then the symmetric powers $U_i^{(\nu_i)}$ are analytic neighborhoods of $\nu_iy_i$ in $Y^{(\nu_i)}$. Since $U_i$ are disjoint, $U_1^{(\nu_1)}\times\cdots\times U_l^{(\nu_l)}$ maps isomorphically to its image under
\begin{align*}
    r_{\nu,Y}:Y^{(\nu_1)}\times\cdots \times Y^{(\nu_l)}&\to Y^{(n)}\\
    \left(\sum_{i=1}^{\nu_1}y_{1i},\cdots,\sum_{i=1}^{\nu_l}y_{li}\right)& \mapsto\sum_{i,j}y_{ij}.
\end{align*}
Therefore, $U=U_1^{(\nu_1)}\times\cdots\times U_l^{(\nu_l)}$ is an analytic neighborhood of $\nu_1y_1+\cdots+\nu_ly_l$. Since
\[
\begin{tikzcd}
X^{(\nu_1)}\times\cdots\times X^{(\nu_l)}\arrow[r,"r_{\nu,X}"]\arrow[d]& X^{(n)}\arrow[d,"f^{(n)}"]\\
Y^{(\nu_1)}\times\cdots\times Y^{(\nu_l)}\arrow[r,"r_{\nu,Y}"]& Y^{(n)}
\end{tikzcd}
\]
is a Cartesian product, $\left(f^{(n)}\right)^{-1}U\to U$ is the product of maps 
\[
\left(f^{(\nu_i)}\right)^{-1}\left(U_i^{(\nu_i)}\right)\to U_i^{(\nu_i)}.
\]
By Proposition \ref{2.7}, (2) follows. 
\end{proof}

\section{Douady Spaces}
To study the analytically local behavior of singular fibers of a Hitchin-type fibration of Hilbert schemes, we need to introduce the complex analytic counterpart of Hilbert schemes, called the Douady spaces.
Let $S$ be a smooth complex surface.  The Douady space of length $n$ on surface $S$, denoted as $S^{[n]}$, parametrizes 0-dimensional analytic subvarieties of length $n$ on $S$. $S^{[n]}$ is a smooth complex manifold of dimension $2n$ and there is a natural projective morphism $\pi:S^{[n]}\to S^{(n)}$ to the symmetric product, sending a $0$-dimensional subvariety to its support, called the Douady-Barlet map. When $S$ is a smooth quasi-projective algebraic surface, the Douady-Barlet map coincides with the Hilbert-Chow morphism. We refer to \cite[Section 2]{dCM} for details.

The cohomology groups of Douady spaces are studied in \cite{dCM} by analyzing the local structure of the Douady-Barlet map. Let $\nu=1^{a_1}\cdots n^{a_n}$ be a partition of $n$, \emph{i.e.} $i$ appears $a_i$ times. Let 
\begin{equation} \label{999}
S^{(\nu)}=S^{(a_1)}\times\cdots\times S^{(a_n)}
\end{equation}
be the product of symmetric products. Then geometric objects on $S$ naturally define objects of the same type on $S^{(\nu)}$ via products and quotients. Following \cite[Definition 2.5]{Z}, for $A\in D^b_c(S)$ we denote
\begin{equation} \label{1210}
A^{(\nu)}=\boxtimes_{i=1}^n \left(A^{\boxtimes a_i}\right)^{\mathfrak{S}_{a_i}}\in D^b_c(S^{(\nu)}).
\end{equation}
We have
\begin{equation} \label{1211}
H^*(S^{(\nu)},A^{(\nu)})=\bigotimes_{i=1}^n H^*(S^{(a_i)},A^{(a_i)})=
\bigotimes_{i=1}^n \textup{Sym}^{a_i}H^*(S,A).
\end{equation}
Let 
\begin{align*}
    \iota_\nu: S^{(\nu)}=S^{(a_1)}\times\cdots\times S^{(a_n)}&\to S^{(n)}\\    (\textbf{\textit{x}}_1,\cdots,\textbf{\textit{x}}_n)&\mapsto(\textbf{\textit{x}}_1+2\textbf{\textit{x}}_2+\cdots+n\textbf{\textit{x}}_n).
\end{align*}

\begin{prop}{\cite[Theorem 4.1.1]{dCM}}\label{3.1}
Let $S$ be a smooth complex surface. Let $\pi:S^{[n]}\to S^{(n)}$ be the Douady-Barlet map. Then there is a canonical decomposition
\[
R\pi_*\BQ_{S^{[n]}}[2n]=\bigoplus_{\nu\vdash n} \iota_{\nu*} \BQ_{S^{(\nu)}}[2l(\nu)],
\]
where $\nu$ runs through all partitions of $n$ and $l(\nu)$ is the length of $\nu$.
In particular, the G\"ottsche formula holds.
\[
H^*(S^{[n]})[2n]=\bigoplus_{\nu=1^{a_1}\cdots n^{a_n}} \bigotimes_{i=1}^n H^*(S^{(a_i)})[2(a_1+\cdots+a_n)].
\]
\end{prop}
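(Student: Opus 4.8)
The plan is to deduce the decomposition from the semismallness of $\pi$ together with the decomposition theorem for proper semismall morphisms, in the Hodge-theoretic form of de Cataldo--Migliorini that is valid for smooth K\"ahler sources and hence applies to the analytic Douady space. First I would verify semismallness. Stratify $S^{(n)}$ by partition type: for $\nu=1^{a_1}\cdots n^{a_n}$ let $S^{(n)}_\nu$ be the locus of $0$-cycles with multiplicity profile $\nu$, which is exactly the image of the open part of $\iota_\nu$ and has dimension $2l(\nu)$, so codimension $2(n-l(\nu))$. The fiber of $\pi$ over a point of $S^{(n)}_\nu$ is a product of punctual Douady spaces $\prod_i (S^{[i]}_0)^{a_i}$, which is irreducible of dimension $\sum_i a_i(i-1)=n-l(\nu)$. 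Thus $2\dim(\text{fiber})=\mathrm{codim}\, S^{(n)}_\nu$ for every $\nu$, so $\pi$ is semismall and every stratum is relevant.

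By the semismall decomposition theorem, $R\pi_*\BQ_{S^{[n]}}[2n]$ is a perverse sheaf and splits, with no nonzero shifts, as $\bigoplus_\nu \mathrm{IC}_{\overline{S^{(n)}_\nu}}(L_\nu)$, the sum ranging over all partitions $\nu$ and $L_\nu$ the local system on $S^{(n)}_\nu$ whose stalk is the top cohomology of the fiber with its monodromy. The next step is to show each $L_\nu$ is trivial. Since the fiber is irreducible, $L_\nu$ has rank one; the top class of each punctual factor is canonical, so point motions act trivially and the monodromy is generated by the permutations of the $a_i$ equal-multiplicity points. These permute the tensor factors of $\bigotimes_i \bigotimes_{j=1}^{a_i} H^{2(i-1)}\big((S^{[i]}_0)_j\big)$, which lie in \emph{even} degree, hence without Koszul sign; therefore the monodromy is trivial and $L_\nu=\BQ$.

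It then remains to identify $\mathrm{IC}_{\overline{S^{(n)}_\nu}}(\BQ)$ with $\iota_{\nu*}\BQ_{S^{(\nu)}}[2l(\nu)]$. The key point is that $S^{(\nu)}=\prod_i S^{(a_i)}$ is a finite quotient of a smooth manifold, hence a $\BQ$-homology manifold; consequently $\BQ_{S^{(\nu)}}[2l(\nu)]=\BQ_{S^{(\nu)}}[\dim S^{(\nu)}]$ is self-dual and perverse and equals $\mathrm{IC}_{S^{(\nu)}}(\BQ)$. The map $\iota_\nu$ is finite and restricts to an isomorphism over the open stratum $S^{(n)}_\nu$, and $S^{(\nu)}$ is normal, so $\iota_\nu$ realizes $S^{(\nu)}$ as the normalization of $\overline{S^{(n)}_\nu}$. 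Since finite pushforward is $t$-exact and carries the IC complex of a generically-iso normal source to the IC complex of the image, $\iota_{\nu*}\mathrm{IC}_{S^{(\nu)}}(\BQ)=\mathrm{IC}_{\overline{S^{(n)}_\nu}}(\BQ)$, producing the asserted summand with the correct shift $[2l(\nu)]$. Summing over $\nu$ gives the decomposition, and taking hypercohomology with $H^*(S^{(a_i)})=\mathrm{Sym}^{a_i}H^*(S)$ yields the G\"ottsche formula.

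The main obstacle is not the combinatorics but the availability of the decomposition theorem in the complex-analytic, merely K\"ahler setting: one cannot invoke the algebraic BBD/Saito statement directly, since $S$ need only be K\"ahler and $S^{[n]}$, $S^{(n)}$ are analytic spaces. The substitute is de Cataldo--Migliorini's proof of the decomposition for semismall maps, which is local and Hodge-theoretic, resting on the nondegeneracy of the refined intersection forms on the fibers rather than on algebraicity; checking that this machinery --- semismallness, the refined intersection form, and the analytic-local product structure of the Douady--Barlet map along each stratum --- carries over is exactly the content that makes \cite[Theorem 4.1.1]{dCM} applicable here. Once that is granted, the remaining steps are the monodromy computation and the finite-pushforward identification above.
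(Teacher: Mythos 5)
Your proposal is correct and takes essentially the same route as the paper's source: the paper gives no independent proof of Proposition \ref{3.1}, quoting it directly from \cite{dCM}, and the argument there is exactly what you outline --- semismallness of the Douady--Barlet map via Brian\c{c}on's irreducibility of the punctual fibers, triviality of the resulting rank-one local systems on the relevant strata, identification of each IC summand with $\iota_{\nu*}\BQ_{S^{(\nu)}}[2l(\nu)]$ because $S^{(\nu)}$ is a normal $\BQ$-homology manifold normalizing the stratum closure, and a Hodge-theoretic splitting argument valid for proper maps in the analytic category. The caveat you flag at the end --- that one cannot invoke the algebraic decomposition theorem of \cite{BBD} and must instead use the semismall, refined-intersection-form machinery --- is precisely the content of \cite{dCM} and the reason the paper cites it rather than proving the statement itself.
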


When $S$ is a smooth K\"ahler surface equipped with a proper holomorphic morphism $f:S\to C$ onto a smooth curve, there is a proper morphism $f^{[n]}:S^{[n]}\to S^{(n)}\to C^{(n)}$, as shown below. 
\[
\begin{tikzcd}
 & S^{[n]}\arrow[d,swap,"\pi"]\arrow[dd,bend left,"f^{[n]}"]\\
S^{(\nu)}\arrow[r,"\iota_\nu"]\arrow[d,swap,"f^{(\nu)}"]& S^{(n)}\arrow[d,swap,"f^{(n)}"]\\
C^{(\nu)}\arrow[r,"\iota_\nu"]& C^{(n)}.\\
\end{tikzcd}
\]

The decomposition in Proposition \ref{3.1} can be pushforward further to $C^{(n)}$ and the perverse filtration on $H^*(S^{[n]})$ associated with $f^{[n]}$ is described as follows\footnote{This result was originally stated when $S$ and $C$ are quasi-projective, but the proof applies to the K\"ahler case. In fact, the quasi-projectivity was introduced to apply the BBDG perverse decomposition theorem, which is true for proper morphism of K\"ahler manifolds.}.

\begin{prop}{\cite[Proposition 4.12, Corollary 4.14]{Z}} \label{perv}
There is an isomorphism
\begin{equation}\label{1212}
Rf^{[n]}_*\BQ_{S^{[n]}}[2n]=\bigoplus_{\nu\vdash n} \iota_{\nu*}(Rf_*\BQ_S)^{(\nu)}[2l(\nu)].
\end{equation}
   Under the isomorphism 
\[
H^{d}(S^{[n]})=\bigoplus_{\nu} H^{d+2l(\nu)-2n}(S^{(\nu)}),
\]
the perverse filtration can be identified as
\[
P_kH^{d}(S^{[n]})=\bigoplus_{\nu} P_{k+l(\nu)-n}H^{d+2l(\nu)-2n}(S^{(\nu)}),
\]
where the perverse filtration on the right side is associated with the induced morphism $f^{(\nu)}:S^{(\nu)}\to C^{(\nu)}$.
\end{prop}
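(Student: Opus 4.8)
The plan is to prove the two assertions in turn: first the sheaf-level decomposition (\ref{1212}), and then the identification of the perverse filtration it induces. For the decomposition, I would push the de Cataldo--Migliorini formula of Proposition \ref{3.1} forward along $f^{(n)}$. Since $f^{[n]}=f^{(n)}\circ\pi$, applying $Rf^{(n)}_*$ to $R\pi_*\BQ_{S^{[n]}}[2n]=\bigoplus_{\nu}\iota_{\nu*}\BQ_{S^{(\nu)}}[2l(\nu)]$ gives
\[
Rf^{[n]}_*\BQ_{S^{[n]}}[2n]=\bigoplus_\nu Rf^{(n)}_*\iota_{\nu*}\BQ_{S^{(\nu)}}[2l(\nu)].
\]
Because $\iota_\nu$ is finite we have $R\iota_{\nu*}=\iota_{\nu*}$, and the commuting square $f^{(n)}\circ\iota_\nu=\iota_\nu\circ f^{(\nu)}$ together with proper base change gives $Rf^{(n)}_*\iota_{\nu*}=\iota_{\nu*}Rf^{(\nu)}_*$. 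Hence each summand equals $\iota_{\nu*}Rf^{(\nu)}_*\BQ_{S^{(\nu)}}[2l(\nu)]$, and it remains to identify $Rf^{(\nu)}_*\BQ_{S^{(\nu)}}$ with $(Rf_*\BQ_S)^{(\nu)}$.

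For this identification I would work one symmetric factor at a time via the quotient square relating $f^{(a)}$ to $f^{\times a}$. With $\BQ$-coefficients the constant sheaf on $S^{(a)}$ is the $\mathfrak{S}_a$-invariant part of the pushforward of $\BQ_{S^{\times a}}$ under the (finite) quotient map, and since $\BQ[\mathfrak{S}_a]$ is semisimple, taking invariants is exact and commutes with $Rf^{(a)}_*$. Combining this with the K\"unneth identity $Rf^{\times a}_*\BQ_{S^{\times a}}=(Rf_*\BQ_S)^{\boxtimes a}$ yields $Rf^{(a)}_*\BQ_{S^{(a)}}=\bigl((Rf_*\BQ_S)^{\boxtimes a}\bigr)^{\mathfrak{S}_a}$; taking the external product over the parts of $\nu$ produces $Rf^{(\nu)}_*\BQ_{S^{(\nu)}}=(Rf_*\BQ_S)^{(\nu)}$, which establishes (\ref{1212}).

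For the perverse filtration the key input is that $\iota_\nu$ finite makes $\iota_{\nu*}$ t-exact for the perverse t-structure, so it commutes with $^\Fp\tau_{\le m}$; since truncation also commutes with finite direct sums, the decomposition (\ref{1212}) is compatible with truncation summand by summand. Writing $Rf^{(\nu)}_*\BQ_{S^{(\nu)}}[l(\nu)]=\bigoplus_j\CP^{(\nu)}_j[-j]$ with $\CP^{(\nu)}_j={}^\Fp H^j$ perverse, I would substitute into (\ref{1212}) and observe that $\iota_{\nu*}\CP^{(\nu)}_j$ sits in perverse degree $j-l(\nu)$ inside $Rf^{[n]}_*\BQ_{S^{[n]}}[2n]$. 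The truncation index for $f^{[n]}$ is $k-r(f^{[n]})=k-n$, so this summand survives $^\Fp\tau_{\le k-n}$ precisely when $j\le k+l(\nu)-n$; taking hypercohomology and using finiteness of $\iota_\nu$ to pass from $C^{(n)}$ back to $C^{(\nu)}$ identifies its contribution with the graded piece $G_jH^{d+2l(\nu)-2n}(S^{(\nu)})$ of the $f^{(\nu)}$-filtration. Assembling over all $j\le k+l(\nu)-n$ and all $\nu$ gives simultaneously the stated cohomological isomorphism and the formula $P_kH^{d}(S^{[n]})=\bigoplus_\nu P_{k+l(\nu)-n}H^{d+2l(\nu)-2n}(S^{(\nu)})$.

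The conceptual ingredients---proper base change, finiteness and t-exactness of $\iota_{\nu*}$, and the semisimplicity argument for the symmetric-power identity---are routine; the real care lies in the shift bookkeeping, namely reconciling the dimension shift $\dim S^{[n]}=2n$, the internal length shifts $2l(\nu)$, and the defect normalizations $r(f^{[n]})=n$, $r(f^{(\nu)})=l(\nu)$, so that the single index $k-n$ downstairs translates into the correct index $k+l(\nu)-n$ on each $C^{(\nu)}$. Finally, since every input---including the decomposition theorem behind Proposition \ref{3.1} that produces the perverse sheaves $\CP^{(\nu)}_j$---holds for proper morphisms of K\"ahler manifolds, the argument of \cite{Z} carries over to the present K\"ahler setting, as the footnote indicates.
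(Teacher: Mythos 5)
Your proposal is correct and follows essentially the same route as the argument this paper relies on (the proof of \cite[Proposition 4.12, Corollary 4.14]{Z}, imported here by citation with the footnote justifying the K\"ahler case): push the de Cataldo--Migliorini decomposition of Proposition \ref{3.1} forward along $f^{(n)}$, identify $Rf^{(\nu)}_*\BQ_{S^{(\nu)}}=(Rf_*\BQ_S)^{(\nu)}$ via the $\mathfrak{S}_a$-invariant--summand argument with $\BQ$-coefficients, and then match perverse truncation indices using $r(f^{[n]})=n$ and $r(f^{(\nu)})=l(\nu)$, exactly as you do. One cosmetic point: the identity $Rf^{(n)}_*\iota_{\nu*}=\iota_{\nu*}Rf^{(\nu)}_*$ needs only functoriality of derived pushforward applied to the commuting square $f^{(n)}\circ\iota_\nu=\iota_\nu\circ f^{(\nu)}$ (plus $R\iota_{\nu*}=\iota_{\nu*}$ by finiteness), not proper base change.
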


The results in \cite[Section 4]{Z1} for algebraic varieties hold in K\"ahler setting. In fact, they are based on two ingredients: the relation between the perverse filtration and the ring structure of $H^*(S^{[n]})$ studied in \cite[Section 4.2]{Z1}, and the geometry of $f:S\to C$ studied in \cite[Section 4.3]{Z1}. In the K\"ahler case, the cohomology group of $S^{[n]}$ and the perverse filtration associated with $f^{[n]}$ are described in Proposition \ref{3.1} and \ref{perv}, which are exactly the same as Hilbert schemes. Furthermore, the Nakajima operators and Virasoro operators for Douady spaces satisfy the same relations as the ones for Hilbert schemes; see \cite{N}. Therefore, the cohomology ring of Douady spaces satisfy the same formula as the main theorem of \cite{LQW}. It is straightforward to check the arguments \cite[Section 4.3]{Z1} does not depend on whether $C$ is algebraic or not. For our purpose, we need the following counterparts of \cite[Theorem 4.9, 4.17]{Z1} in the K\"ahler setting.

\begin{thm} \label{thm0}
Let $n\ge2$. Let $f:S\to C$ a proper holomorphic fibration of a smooth K\"ahler surface over a smooth curve. If the perverse filtration associated with $f^{[n]}$ is multiplicative, then $f$ is an elliptic fibration, i.e. general fibers have genus 1. When $f$ is a local model, i.e. the restriction map $H^*(S)\to H^*(f^{-1}(p))$ for some $p\in C$ is an isomorphism and $C$ is contractible, then the converse is also true.
\end{thm}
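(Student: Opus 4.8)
The plan is to transfer the proof of \cite[Theorem 4.9, 4.17]{Z1} to the K\"ahler category by verifying that each of its structural inputs has a verbatim Douady-space counterpart. That argument rests on three ingredients: (i) the explicit form of $H^*(S^{[n]})$ together with the perverse filtration associated with $f^{[n]}$; (ii) the ring structure of $H^*(S^{[n]})$, encoded by the Nakajima--Virasoro operators and the closed cup-product formula of \cite{LQW}; and (iii) a numerical computation for $f:S\to C$ relating multiplicativity to the genus of the fiber. I would check (i)--(iii) survive in the analytic setting, after which the combinatorics of \cite{Z1} carry over unchanged.

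Ingredient (i) is already supplied by Proposition \ref{3.1} and Proposition \ref{perv}: the G\"ottsche decomposition and the formula for $P_kH^*(S^{[n]})$ both hold here, the sole nontrivial input being the decomposition theorem, valid for proper morphisms of K\"ahler manifolds. For (ii), the Nakajima creation and annihilation operators are defined by the incidence Douady subvarieties and satisfy the Heisenberg relations by \cite{N}, and the Virasoro operators and tautological Chern classes are available analytically. The key observation is that the \cite{LQW} product formula is a statement about cup products and Chern classes of tautological sheaves, both of which are topological and insensitive to algebraicity; hence the cohomology ring of the Douady space $S^{[n]}$ agrees, as a filtered ring, with the $\Hilb^n(S)$ model, and every manipulation of \cite[Section 4.2]{Z1} applies.

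With the filtered ring structure in place, I would test multiplicativity on the explicit Nakajima monomials that control it. The heart is the perversity of the canonical class: by adjunction $K_S\cdot F=2g-2$ for a fiber $F$ of genus $g$, while $F=f^*[\mathrm{pt}]$ has perversity $0$. By the Poincar\'e-duality symmetry of the perverse filtration on $S$ (Corollary \ref{2.2}, with $r(f)=1$), a perversity-$0$ class pairs nontrivially only against a perversity-$2$ class, so $K_S$ carries a nonzero top-perversity component exactly when $K_S\cdot F\ne0$, i.e. $\Fp(K_S)=2\iff g\ne1$. In the \cite{LQW} formula the cup product of Nakajima classes generates correction terms proportional to Nakajima operators applied to $K_S$; when $\Fp(K_S)=2$ these raise perversity above the sum of the inputs and destroy multiplicativity, whereas for $g=1$ they are inert. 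This yields the forward implication. For the converse under the local-model hypothesis, contractibility of $C$ removes monodromy and $H^*(S)\xlongrightarrow{\sim}H^*(F)$ identifies the surface cohomology with that of the fiber; taking $g=1$ makes the dangerous correction terms vanish, and one produces a filtered basis of Nakajima monomials adapted to a perverse decomposition that is strongly multiplicative, whence $P_\bullet H^*(S^{[n]})$ is multiplicative.

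The main obstacle is ingredient (ii): justifying that the \cite{LQW} formula, originally proved by algebraic intersection theory on $\Hilb^n(S)$, persists for Douady spaces. The cleanest route is to recall that \cite{LQW} recover the full ring structure from the action of Virasoro operators assembled from the Nakajima operators and the tautological classes, so that, once the Heisenberg relations of \cite{N} and the topological definition of the tautological Chern classes are in hand, the derivation is purely formal and uses no algebraic structure of $S$. I would therefore make explicit that the entire chain---Heisenberg relations, Virasoro constraints, and the resulting cup-product formula---follows from correspondences in the analytic category, and that the single numerical input $K_S\cdot F=2g-2$ is a statement of K\"ahler intersection theory, thereby completing the transfer.
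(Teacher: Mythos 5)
Your proposal is correct and follows essentially the same route as the paper: the paper also proves Theorem \ref{thm0} by transferring \cite[Theorems 4.9, 4.17]{Z1} to the analytic category, using Propositions \ref{3.1} and \ref{perv} for the G\"ottsche decomposition and the perverse filtration, citing \cite{N} for the Nakajima--Virasoro relations on Douady spaces (hence the validity of the \cite{LQW} cup-product formula), and observing that the geometric arguments of \cite[Section 4.3]{Z1} are independent of the algebraicity of $C$. Your further sketch of the internal mechanism of \cite{Z1} (the perversity of $K_S$ detected via $K_S\cdot F=2g-2$ and the resulting correction terms) is consistent with that reference, which the paper itself simply cites rather than reproduces.
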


\section{Local multiplicativity of Hilbert schemes of fibered surfaces}
In this section we will show that the local perverse filtrations on the fibers of $f^{[n]}:S^{[n]}\to C^{(n)}$ are all multiplicative if and only if $f$ is an elliptic fibration. We analyze the fibers of $f^{[n]}$ first. To simplify the notation, let $F^{[n]}(\textbf{\textit{x}})$ be the fiber $\left(f^{[n]}\right)^{-1}(\textbf{\textit{x}})$ for $\textbf{\textit{x}}\in C^{(n)}$. As a fiber of $f^{[n]}:S^{[n]}\to C^{(n)}$, the cohomology $H^*(F^{[n]}(\textbf{\textit{x}}))$ carries the local perverse filtration (\ref{fiber}) associated with $f^{[n]}$.

\begin{prop} \label{5.1}
Let $f:S\to C$ be a proper holomorphic fibration of a K\"ahler surface over a smooth curve. Let $\textbf{x}=\nu_1x_1+\cdots +\nu_lx_l$ be a point on $C^{(n)}$. Then the rational map induced by the union of subschemes with disjoint supports
\begin{align*}
q:S^{[\nu_1]}\times\cdots\times S^{[\nu_l]}&\dashrightarrow S^{[n]}\\
(\xi_1,\cdots,\xi_l)&\longmapsto \xi_1\cup\cdots\cup\xi_l
\end{align*}
induces an isomorphism
\[
F^{[n]}(\textbf{x})=F^{[\nu_1]}(\nu_1x_1)\times\cdots\times F^{[\nu_l]}(\nu_lx_l),
\]
where $F^{[\nu_i]}(\nu_ix_i)$ is the fiber of the morphism $S^{[\nu_i]}\to C^{(\nu_i)}$ over the point $\nu_ix_i\in C^{(\nu_i)}$, $1\le i\le l$. Furthermore, the local perverse filtration on the fiber $F^{[n]}(\textbf{x})$ satisfies the K\"unneth formula in the sense of Proposition \ref{2.7}, i.e.
\[
P_kH^*\left(F^{[n]}(\textbf{x})\right)=\left\langle\gamma_1\boxtimes\cdots\boxtimes\gamma_l\mid \gamma_i\in P_{k_i}H^*\left(F^{[\nu_i]}(\nu_ix_i)\right),\sum_{i=1}^lk_i=k\right\rangle,
\]
where the local perverse filtration on $F^{[\nu_i]}(\nu_i x_i)$ is defined by (\ref{fiber}) as a fiber of $S^{[\nu_l]}\to C^{(\nu_i)}$. The local perverse filtration on $F^{[n]}(\textbf{x})$ is multiplicative if and only if the local perverse filtrations on $F^{[\nu_i]}(\nu_ix_i)$ are all multiplicative for $1\le i\le l$.
\end{prop}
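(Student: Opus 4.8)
The plan is to reduce the statement about the fiber $F^{[n]}(\textbf{x})$ to the product statement already established for symmetric products in Proposition \ref{2.8} and its K\"unneth-type consequence in Proposition \ref{2.7}. First I would make precise the geometric isomorphism induced by the map $q$: away from the diagonal, subschemes supported near the distinct points $x_1,\dots,x_l$ split uniquely as a disjoint union, so on the open locus of $C^{(n)}$ near $\textbf{x}$ the Douady space $S^{[n]}$ is analytically the product of the $S^{[\nu_i]}$. Concretely I would choose disjoint analytic neighborhoods $U_i$ of the points $x_i$ in $C$, form the neighborhood $U = U_1^{(\nu_1)}\times\cdots\times U_l^{(\nu_l)}$ of $\textbf{x}$ in $C^{(n)}$ exactly as in the proof of Proposition \ref{2.8}(2), and observe that $(f^{[n]})^{-1}(U)$ is the product of the $(f^{[\nu_i]})^{-1}(U_i^{(\nu_i)})$. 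This is the analytic-local analogue of the Cartesian square in Proposition \ref{2.8}, now lifted through the Douady--Barlet map $\pi$.

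Once the product structure of $(f^{[n]})^{-1}(U)\to U$ is in place, the K\"unneth statement for the local perverse filtration follows immediately from Proposition \ref{2.7} applied to the maps $f_i = f^{[\nu_i]}\colon S^{[\nu_i]}\to C^{(\nu_i)}$ restricted to the neighborhoods $U_i^{(\nu_i)}$, combined with Proposition \ref{2.6} which identifies the local perverse filtration on a fiber with the perverse filtration on a small neighborhood of that fiber. That is, I would invoke Proposition \ref{2.6} to pass from $F^{[n]}(\textbf{x})$ to $(f^{[n]})^{-1}(U)$ and from each $F^{[\nu_i]}(\nu_ix_i)$ to $(f^{[\nu_i]})^{-1}(U_i^{(\nu_i)})$, so that the K\"unneth formula for the filtration reduces to the product case of Proposition \ref{2.7}. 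The final equivalence of multiplicativity is then exactly the last sentence of Proposition \ref{2.7}: the filtration on a product is multiplicative if and only if each factor's filtration is.

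The main obstacle I anticipate is verifying that the union map $q$, which is only a rational map on the full product $S^{[\nu_1]}\times\cdots\times S^{[\nu_l]}$, becomes a genuine analytic isomorphism over the chosen neighborhood $U$, and that this isomorphism is compatible with the perverse decomposition of Proposition \ref{perv} (equation \eqref{1212}). The rational indeterminacy of $q$ occurs precisely where supports in different factors collide; by shrinking the $U_i$ to be pairwise disjoint this locus is removed over $U$, so $q$ restricts to a biholomorphism there. I would make this rigorous by checking that the Cartesian diagram relating $\pi$ on $S^{[n]}$ to the product of the $\pi_{\nu_i}$ on the $S^{[\nu_i]}$ holds over $U$, which in turn follows from the disjointness of supports and the product description of the Douady--Barlet map. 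Once the isomorphism is established at the level of spaces and shown to intertwine the summands $\iota_{\nu*}(Rf_*\BQ_S)^{(\nu)}$ appropriately, the compatibility with the filtration is automatic from the functoriality already encoded in Propositions \ref{2.7} and \ref{2.8}.
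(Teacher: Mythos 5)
Your proposal is correct and follows essentially the same route as the paper: the paper likewise observes that the indeterminacy locus of $q$ lies over the collision locus, that $f^{[n]}$ pulls back to the product of the $f^{[\nu_i]}$ over its complement (a Cartesian square through the Douady--Barlet maps), and then concludes by the argument of Proposition \ref{2.8}(2), i.e.\ disjoint neighborhoods $U_i$ and Proposition \ref{2.7}. Your extra appeal to Proposition \ref{2.6} is harmless but not needed, since Proposition \ref{2.7} is stated at the level of stalks and so applies directly once the product structure over $U$ is established.
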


\begin{proof}
Consider the diagram
\[
\begin{tikzcd}
S^{[\nu_1]}\times\cdots\times S^{[\nu_l]}\arrow[r,dashed,"q"]\ar[d,swap,"\pi_{\nu_1}\times\cdots\times\pi_{\nu_l}"]& S^{[n]}\ar[d,"\pi_n"]\\
S^{(\nu_1)}\times\cdots\times S^{(\nu_l)}\arrow[r,"r_{\nu,S}"]\arrow[d,swap,"f^{(\nu_1)}\times \cdots\times f^{(\nu_l)}"]&S^{(n)}\arrow[d,"f^{(n)}"]\\
C^{(\nu_1)}\times\cdots\times C^{(\nu_l)}\arrow[r,"r_{\nu,C}"]&C^{(n)},\\
\end{tikzcd}
\]
where $r_{\nu,S}$ and $r_{\nu,C}$ are defined in the proof of Proposition \ref{2.8}. Let 
\[
\CD=\{(\textit{\textbf{y}}_1,\cdots,\textit{\textbf{y}}_l)\mid \textit{\textbf{y}}_i\in C^{(\nu_i)}, 1\le i\le l \textup{ and } \exists \,i\neq j \textup{ such that } \textit{\textbf{y}}_i\cap \textit{\textbf{y}}_j\neq\varnothing\}
\]
be the closed locus in $C^{(\nu_1)}\times\cdots\times C^{(\nu_l)}$ where some points in different factors collide. Then the indeterminacy locus of $q$ 
\[
\left\{(\xi_1,\cdots,\xi_l)\mid\exists\, i\neq j\textrm{ such that Supp }\xi_i\cap \textrm{Supp }\xi_j\neq\varnothing\right\}
\]
is in the preimage $(f^{[\nu]})^{-1}(\CD)$, and the diagram
\[
\begin{tikzcd}
S^{[\nu_1]}\times\cdots\times S^{[\nu_l]}\setminus (f^{[\nu]})^{-1}\CD\arrow[r,"q"]\arrow[d,"f^{[\nu]}"]&S^{[n]}\arrow[d,"f^{[n]}"]\\
C^{(\nu_1)}\times\cdots C^{(\nu_l)}\setminus\CD\arrow[r,"r_C"]& C^{(n)}\\
\end{tikzcd}
\]
is a Cartesian diagram. By definition we have $r_{\nu,C}(\nu_1x_1,\cdots,\nu_lx_l)=\textbf{\textit{x}}$. The conclusion follows from the same argument as the proof of Proposition \ref{2.8}.(2).
\end{proof}

\begin{prop} \label{5.3}
Let $n\ge 2$. Let $f:S\to C$ be a proper holomophic fibration of a K\"ahler surface over a smooth curve. Then the local perverse filtration $P_\bullet H^*(F^{[n]}(nx))$ at $nx\in C^{(n)}$ is multiplicative if and only if $f$ is an elliptic fibration.
\end{prop}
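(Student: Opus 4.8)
The plan is to reduce the assertion to Theorem \ref{thm0} by realizing the fibration near $nx$ as a local model over a small disk. Fix a contractible neighborhood $V$ of $x$ in $C$, set $S_V:=f^{-1}(V)$, and write $f_V:S_V\to V$ for the restricted fibration. A length-$n$ subscheme $\xi$ of $S$ satisfies $f^{[n]}(\xi)\in V^{(n)}$ precisely when $\mathrm{Supp}(\xi)\subset S_V$, so $(f^{[n]})^{-1}(V^{(n)})=(S_V)^{[n]}$ and the restriction of $f^{[n]}$ over $V^{(n)}$ is exactly the analogous map $(f_V)^{[n]}:(S_V)^{[n]}\to V^{(n)}$. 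Since the symmetric products $V^{(n)}$ form a cofinal system of neighborhoods of $nx$ in $C^{(n)}$, the discussion following Proposition \ref{2.6} applies: for $V$ small enough the restriction $H^*((S_V)^{[n]})\to H^*(F^{[n]}(nx))$ is an isomorphism of algebras preserving the perverse filtration, so the local perverse filtration at $nx$ is multiplicative if and only if the perverse filtration associated with $(f_V)^{[n]}$ is multiplicative.

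Next I would check that, after possibly shrinking $V$, the restricted fibration $f_V$ is a local model in the sense of Theorem \ref{thm0}. Applying Proposition \ref{2.6} to $f$ itself at the point $x$ gives, for $V$ small, an isomorphism $H^*(S_V)\xrightarrow{\sim}H^*(F_f(x))$, and $V$ is contractible by construction; these are exactly the two defining conditions of a local model. I may therefore arrange a single disk $V$ satisfying both this condition and the one of the previous paragraph. Finally, $f_V$ is an elliptic fibration if and only if $f$ is, because the genus of the general fiber is constant along the connected curve $C$ and $V$ is a nonempty open subset, so the general fibers of $f_V$ are general fibers of $f$.

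Applying Theorem \ref{thm0} to $f_V:S_V\to V$ now finishes the argument: its first assertion shows that multiplicativity of the perverse filtration for $(f_V)^{[n]}$ forces $f_V$, hence $f$, to be elliptic, while its converse (which uses precisely that $f_V$ is a local model) shows that when $f$ is elliptic the perverse filtration for $(f_V)^{[n]}$ is multiplicative. Chaining these with the equivalence established in the first paragraph yields the desired equivalence.

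The step I expect to require the most care is the simultaneous choice of $V$: I must ensure that one disk at once computes the stalk of $R(f^{[n]})_*\BQ_{S^{[n]}}$ at $nx$, makes $H^*(S_V)\to H^*(F_f(x))$ an isomorphism, and is contractible. Once this is secured, both implications follow formally, the forward one from the first part of Theorem \ref{thm0} and the backward one from its local-model converse.
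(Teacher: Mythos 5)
Your overall strategy is the same as the paper's: restrict over $V^{(n)}$, identify $(f^{[n]})^{-1}(V^{(n)})$ with $(S_V)^{[n]}$ and the restricted map with $(f_V)^{[n]}$, transfer the local perverse filtration at $nx$ to the perverse filtration of $(f_V)^{[n]}$, and invoke Theorem \ref{thm0} (forward direction needs nothing; the converse needs $f_V$ to be a local model, and ellipticity of $f_V$ is equivalent to that of $f$ since the genus of the general fiber is constant). But there is a genuine gap at precisely the step you flag as delicate: the assertion that for $V$ small enough the restriction $H^*((S_V)^{[n]})\to H^*(F^{[n]}(nx))$ is an isomorphism. Proposition \ref{2.6} produces \emph{one} neighborhood $U$ of $nx$ computing the stalk of $Rf^{[n]}_*\BQ_{S^{[n]}}$; it does not assert that every sufficiently small neighborhood does, and the property is not inherited under shrinking: if $U'\subset W\subset U$ with $U$ and $U'$ stalk-computing, then restriction from over $W$ is surjective onto $H^*(F^{[n]}(nx))$ but need not be injective (already for the constant sheaf on $\BC$, a disk minus an off-center point sits between two disks and carries extra $H^1$). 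So cofinality of the system $\{V^{(n)}\}$ lets you place some $V^{(n)}$ inside $U$, but gives no license to conclude that $V^{(n)}$ itself computes the stalk. Since your whole reduction to Theorem \ref{thm0} runs through this identification, this is the point that must be \emph{proved}, not cited.

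The paper proves exactly this statement as the claim (\ref{1314}), and the mechanism is the ingredient missing from your write-up: choose $V$ once, by Proposition \ref{2.6} applied to $f$ at $x$ (no subsequent shrinking is needed), so that the restriction $H^*(T)\to H^*(F)$ is an isomorphism (\ref{1313}), where $T=f^{-1}(V)$. Then the G\"ottsche-type decomposition (\ref{1212}) of Proposition \ref{perv} for $f^{[n]}_V$ is compatible with pullback to $\{nx\}$, so the restriction $H^*(T^{[n]})\to H^*(F^{[n]}(nx))$ decomposes as a direct sum over partitions $\nu=1^{a_1}\cdots n^{a_n}$ of the maps $\bigotimes_{i}\Sym^{a_i}H^*(T)\to\bigotimes_{i}\Sym^{a_i}H^*(F)$ induced by (\ref{1313}); these are isomorphisms, hence so is (\ref{1314}). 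This reduces the Hilbert-scheme-level isomorphism to the surface-level one, where the base is a curve and Proposition \ref{2.6} applies directly; the filtered version then follows by the argument in the second half of the proof of Proposition \ref{2.6}. Absent this decomposition argument (or some substitute, such as a Thom--Mather type retraction of $(f^{[n]})^{-1}(V^{(n)})$ onto $F^{[n]}(nx)$), your first paragraph is an assertion rather than a proof; the remainder of your argument is correct and coincides with how the paper concludes.
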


\begin{proof}
Let $V$ be the analytic neighborhood of $x$ in $C$ obtained by Proposition \ref{2.6} with respect to the morphism $f:S\to C$. Let $T=f^{-1}(V)$ be the preimage, and we have a Cartesian diagram below. 

\[
\begin{tikzcd}
F\arrow[r,"\iota"]\arrow[d]&T\arrow[r]\arrow[d,"f_V"]&S\arrow[d,"f"]\\
\{x\}\arrow[r,"p"]&V\arrow[r]& C
\end{tikzcd}
\]
By the choice of $V$,  the adjunction morphism
\[
Rf_{V*}\BQ_T\to p_*p^*R_{V*}\BQ_T
\]
induces an isomorphism in cohomology
\begin{equation} \label{1313}
H^*(T)= H^*(V,Rf_{V*}\BQ_T)\xrightarrow{\cong} H^*(F).
\end{equation}

The constructions in Section 3 give a Cartesian diagram
\[
\begin{tikzcd}
F^{[n]}(nx)\arrow[r,"\iota^{[n]}"]\arrow[d]&T^{[n]}\arrow[d,"f^{[n]}_V"]\arrow[r] & S^{[n]}\arrow[d,"f^{[n]}"]\\
\{nx\}\arrow[r,"q"]&V^{(n)}\arrow[r]&C^{(n)}
\end{tikzcd}
\]
We claim that restriction map
\begin{equation}\label{1314}
\iota^{[n]*}:H^*(T^{[n]})\to H^*(F^{[n]}(nx))
\end{equation}
is an isomorphism. To prove the claim, we first note the following commutative diagram 
\[
\begin{tikzcd}
Rf^{[n]}_{V*}\BQ_{T^{[n]}}[2n]\arrow[r,"="]\arrow[d]&\bigoplus_\nu \iota_{\nu*}(Rf_*\BQ_T)^{(\nu)}[2l(\nu)]\arrow[d]\\
q_*q^*Rf^{[n]}_{V*}\BQ_{T^{[n]}}[2n]\arrow[r,"="]&q_*q^*\bigoplus_\nu \iota_{\nu*}(Rf_*\BQ_T)^{(\nu)}[2l(\nu)],
\end{tikzcd}
\]
where the horizontal morphisms are (\ref{1212}). Taking hypercohomology and using (\ref{1210}), we have
\[
\begin{tikzcd}
H^*\left(T^{[n]}\right)[2n]\arrow[r,"="]\arrow[d,"\iota^{[n]*}"]&\bigoplus_\nu \bigotimes_{i=1}^n \textup{Sym}^{a_i}H^*(T)[2l(\nu)]\arrow[d]\\
H^*\left(F^{[n]}(nx)\right)[2n]\arrow[r,"="]&\bigoplus_\nu \bigotimes_{i=1}^n \textup{Sym}^{a_i}H^*(F)[2l(\nu)].
\end{tikzcd}
\]

By (\ref{1313}), the vertical arrow on the right side is an isomorphism, and therefore the  claim (\ref{1314}) holds. Now we can apply Proposition \ref{2.6} to the fiber $F^{[n]}(nx)$ of the morphism $f_V^{[n]}$ to conclude that 
\[
P_\bullet H^*(T^{[n]})\xrightarrow{\sim} P_\bullet H^*(F^{[n]}(nx)).
\]
Therefore, the local perverse filtration on the fiber $F^{[n]}(nx)$ is multiplicative if and only if the perverse filtration associated with $f^{[n]}_V$ is multiplicative. By Theorem \ref{thm0}, the latter is equivalent to $f_V$ being an elliptic fibration. 
\end{proof}

Combining Proposition \ref{5.1} and \ref{5.3}, we have

\begin{thm} \label{4.3}
Let $f:S\to C$ be a proper holomorphic fibration of a K\"ahler surface over a smooth complex curve. Let $f^{[n]}:S^{[n]}\to C^{(n)}$ be the induced morphism. Let $\Delta\subset C^{(n)}$ be the locus where at least two points collide. The cohomology $H^*(F^{[n]}(\textbf{x}))$ is naturally endowed with the local perverse filtration (\ref{fiber}). Then:
\begin{enumerate}
    \item If $\textbf{\textit{x}}\not\in\Delta$, then the local perverse filtration at $\textbf{x}$ is multiplicative.
    \item If $\textbf{\textit{x}}\in\Delta$, then the local perverse filtration at $\textbf{x}$ is multiplicative if and only if $f$ is an elliptic fibration.
\end{enumerate}
\end{thm}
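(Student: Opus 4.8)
The plan is to read the theorem off from the product decomposition of Proposition \ref{5.1} combined with the single-point criterion of Proposition \ref{5.3}; the whole argument then amounts to an accounting of the parts of the partition attached to $\textbf{\textit{x}}$. First I would write $\textbf{\textit{x}}=\nu_1x_1+\cdots+\nu_lx_l$ with the $x_i$ pairwise distinct and $\nu=(\nu_1,\dots,\nu_l)$ a partition of $n$, normalized so that $\nu_1\ge\cdots\ge\nu_l$. By Proposition \ref{5.1} there is an isomorphism $F^{[n]}(\textbf{\textit{x}})\cong F^{[\nu_1]}(\nu_1x_1)\times\cdots\times F^{[\nu_l]}(\nu_lx_l)$ under which the local perverse filtration obeys the K\"unneth formula, so that the local perverse filtration at $\textbf{\textit{x}}$ is multiplicative if and only if the local perverse filtration on every factor $F^{[\nu_i]}(\nu_ix_i)$ is multiplicative. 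This reduces the problem to a single factor, and the behaviour splits according to whether $\nu_i=1$ or $\nu_i\ge2$.

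For the factors with $\nu_i\ge2$ I would invoke Proposition \ref{5.3} verbatim, with $n$ replaced by $\nu_i$ and $x$ by $x_i$: the factor $F^{[\nu_i]}(\nu_ix_i)$ is locally multiplicative if and only if $f$ is an elliptic fibration, a property of $f$ alone, independent of $i$ and of the point chosen. For the factors with $\nu_i=1$ the factor is simply the fiber $F^{[1]}(x_i)=f^{-1}(x_i)$ of the surface fibration $f:S\to C$, and I claim its local perverse filtration is always multiplicative. Indeed $f^{-1}(x_i)$ is a connected curve, so $H^d$ vanishes for $d>2$ and $H^0=\BQ\cdot 1$ with $1$ of perversity $0$; the defect is $r(f)=1$, so the filtration sits in $[0,2]$ and in particular $H^2=P_2H^2$ is the whole top group. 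Since $f$ has connected fibers, the lowest perverse summand of $Rf_*\BQ_S[\dim S-r(f)]$ is the shifted constant sheaf on $C$, which contributes only to $H^0$ of the fiber; hence $P_0H^1(f^{-1}(x_i))=0$. Apart from the trivial products with $1\in H^0$ and the products $H^{\ge1}\cup H^2=0$ that vanish for degree reasons, the only multiplicativity constraint is $P_aH^1\cup P_bH^1\subseteq P_{a+b}H^2$: the cases $a=0$ or $b=0$ are vacuous because $P_0H^1=0$, while for $a,b\ge1$ the product lands in $H^2=P_2H^2\subseteq P_{a+b}H^2$ since $a+b\ge2$. Thus every $\nu_i=1$ factor is multiplicative unconditionally.

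It then remains to assemble the two statements. If $\textbf{\textit{x}}\not\in\Delta$ then every $\nu_i=1$, so all factors are multiplicative and hence so is $F^{[n]}(\textbf{\textit{x}})$, proving (1). If $\textbf{\textit{x}}\in\Delta$ then $\nu_1\ge2$: when $f$ is elliptic every factor is multiplicative (those with $\nu_i\ge2$ by Proposition \ref{5.3}, those with $\nu_i=1$ by the previous paragraph), so $F^{[n]}(\textbf{\textit{x}})$ is multiplicative; conversely, if $f$ is not elliptic, then the factor $F^{[\nu_1]}(\nu_1x_1)$ already fails multiplicativity by Proposition \ref{5.3}, and the K\"unneth equivalence of Proposition \ref{5.1} propagates this failure to $F^{[n]}(\textbf{\textit{x}})$. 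This proves (2).

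Given Propositions \ref{5.1} and \ref{5.3}, the argument is essentially bookkeeping, so I do not expect a serious obstacle at this stage; the only genuine input beyond those two results is the unconditional multiplicativity of the $\nu_i=1$ factors, that is, the local multiplicativity of the perverse filtration of a surface fibered over a curve. The one place that requires care is the perversity computation on a possibly singular fiber $f^{-1}(x_i)$: one must verify that the perversity-$0$ part of $H^*$ is confined to $H^0$ and, equivalently, that $H^1$ carries no perversity-$0$ class, which is exactly what connectedness of the fibers of $f$ guarantees. Once that one-dimensional computation is in place, everything else is formal.
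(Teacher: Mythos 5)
Your proposal is correct and follows the same skeleton as the paper's proof: decompose $F^{[n]}(\textbf{\textit{x}})$ into the factors $F^{[\nu_i]}(\nu_ix_i)$ via Proposition \ref{5.1}, use the fact that multiplicativity of the product filtration is equivalent to multiplicativity of every factor, apply Proposition \ref{5.3} to the factors with $\nu_i\ge2$, and treat the $\nu_i=1$ factors as unconditionally multiplicative. The only divergence is in that last step: the paper disposes of the $\nu_i=1$ factors by citing Proposition \ref{2.6} together with \cite[Proposition 4.17]{Z} (multiplicativity for a surface properly fibered over a curve), whereas you prove it directly by a perversity count on the curve fiber. Your count has exactly the right logic: products with $H^0$ are harmless, $H^1\cup H^1$ lands in $H^2=P_2H^2$, everything else vanishes for degree reasons, so the whole issue is $P_0H^1=0$.

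The one point that needs more than you give it is the claim you yourself flag as delicate: that connectedness of the fibers forces the lowest perverse summand $\CP_0$ of $Rf_*\BQ_S[1]=\bigoplus_{i=0}^{2}\CP_i[-i]$ to be $\BQ_C[1]$, hence $P_0H^1(f^{-1}(x_i))=0$. Connectedness only gives $\mathcal{H}^{-1}(\CP_0)=R^0f_*\BQ_S=\BQ_C$; the actual content of $P_0H^1(f^{-1}(x_i))=0$ is that $\CP_0$ has no punctual part, since $\mathcal{H}^{0}(\CP_0)_{x_i}$ sits inside $(R^1f_*\BQ_S)_{x_i}=H^1(f^{-1}(x_i))$ and is precisely what maps onto $G_0H^1(f^{-1}(x_i))$. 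Excluding skyscraper summands of $\CP_0$ uses the decomposition theorem package, not just connectedness: $\CP_0$ is semisimple, and relative hard Lefschetz (or self-duality of $Rf_*\BQ_S[2]$) gives $\CP_0\cong\CP_2$, while $\CP_2$ has no punctual part because $\mathcal{H}^0(\CP_2)$ is a direct summand of $\mathcal{H}^2(Rf_*\BQ_S[1])=R^3f_*\BQ_S=0$. Adding this argument (or simply quoting Proposition \ref{2.6} plus \cite[Proposition 4.17]{Z}, as the paper does) closes the gap; nothing else in your write-up needs repair.
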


\begin{proof}
    Let $\nu=(\nu_1,\cdots,\nu_l)$ where $\nu_1\ge\nu_2\cdots\ge\nu_l\ge1$ be a partition of $n$ and let $\textbf{\textit{x}}=\nu_1x_1+\cdots+\nu_lx_l$ be a point in the symmetric product $C^{(n)}$, where $x_1,\cdots,x_l$ are distinct points in $C$. By Proposition \ref{5.1}, we have
    \[
    F^{[n]}(\textbf{\textit{x}})=F^{[\nu_1]}(\nu_1x_1)\times\cdots\times F^{[\nu_l]}(\nu_lx_l),
    \]
    and the local perverse filtration at $\textbf{\textit{x}}$ is multiplicative if and only if the local perverse filtrations at $\nu_ix_i\in C^{(\nu_i)}$ are all multiplicative. When $\nu_i=1$, the fiber $F^{[\nu_i]}(\nu_ix_i)$ is just $f^{-1}(x_i)$. By Proposition \ref{2.6} and \cite[Proposition 4.17]{Z}, the perverse filtration on an fiber of an surface properly fibered over a curve is always multiplicative. When $\nu_{i}\ge 2$, we see from Proposition \ref{5.3} that the multiplicativity of $F^{[\nu_{i}]}(\nu_{i}x_{i})$ is equivalent to $f$ is an elliptic fibration. Therefore, we conclude that when $\textbf{\textit{x}}\not\in\Delta$, the local perverse filtration at $\textit{\textbf{x}}$ always multiplicative, and when $\textbf{\textit{x}}\in\Delta$, the local perverse filtration at $\textit{\textbf{x}}$ is multiplicative if and only if $f$ is an elliptic fibration.  
\end{proof}

\begin{thm} \label{4.4}
Let $n\ge2$, $f:S\to C$ and $f^{[n]}:S^{[n]}\to C^{(n)}$ as above. The following are equivalent.
\begin{enumerate}
\item The morphism $f$ is an elliptic fibration.
\item The local perverse filtration at some point $\textbf{x}\in\Delta$ is multiplicative.
\item The local perverse filtration at all points in $C^{(n)}$ are multiplicative.
\end{enumerate}
In particular, if the perverse filtration associated with $f^{[n]}$ is multiplicative, then it is locally multiplicative. When $f$ is a local model in the sense of Theorem \ref{thm0}, then the converse is true.
\end{thm}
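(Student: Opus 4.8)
The plan is to deduce the three-way equivalence directly from Theorem~\ref{4.3}, which already carries out all of the geometric work; the present statement is essentially a repackaging of it. I would argue in a cycle $(1)\Rightarrow(3)\Rightarrow(2)\Rightarrow(1)$. For $(1)\Rightarrow(3)$, assume $f$ is an elliptic fibration: by Theorem~\ref{4.3}.(1) every point $\textbf{\textit{x}}\notin\Delta$ is automatically locally multiplicative, while by Theorem~\ref{4.3}.(2) every $\textbf{\textit{x}}\in\Delta$ is locally multiplicative precisely because $f$ is elliptic, and together these exhaust $C^{(n)}$. The implication $(3)\Rightarrow(2)$ is immediate once one observes that $\Delta\neq\varnothing$: since $n\ge2$, the point $nx$ (partition $(n)$, so $\nu_1=n\ge2$) lies in $\Delta$, so local multiplicativity everywhere holds in particular at some $\textbf{\textit{x}}\in\Delta$. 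Finally $(2)\Rightarrow(1)$ is exactly Theorem~\ref{4.3}.(2) read in the reverse direction: local multiplicativity at a single point of $\Delta$ forces $f$ to be elliptic.

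For the two ``in particular'' assertions I would invoke Theorem~\ref{thm0}. If the global perverse filtration associated with $f^{[n]}$ is multiplicative, then Theorem~\ref{thm0} yields that $f$ is an elliptic fibration, i.e. condition (1); composing with the already-established $(1)\Rightarrow(3)$ gives local multiplicativity at every point, which is the first claim. For the converse under the local-model hypothesis, suppose $f$ is a local model and is locally multiplicative. Local multiplicativity in particular includes multiplicativity at the points of $\Delta$, so $(2)\Rightarrow(1)$ shows $f$ is elliptic; then the second half of Theorem~\ref{thm0}, which asserts that for a local model being an elliptic fibration implies global multiplicativity of the perverse filtration associated with $f^{[n]}$, closes the loop.

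I do not expect a genuine obstacle here, as the heavy lifting is contained in Theorems~\ref{4.3} and~\ref{thm0}; the only points requiring care are bookkeeping ones. \textbf{The first such point} is verifying that $\Delta$ is nonempty, which is where the hypothesis $n\ge2$ enters and is what makes condition (2) meaningful rather than vacuous. \textbf{The second} is being attentive, in the converse to the ``in particular'' statement, that ``local model'' is a genuinely global hypothesis on $f$ (contractibility of $C$ together with the restriction isomorphism $H^*(S)\cong H^*(f^{-1}(p))$), so that the application of Theorem~\ref{thm0} in the elliptic-to-global direction is legitimate only under that standing assumption. Without it, local multiplicativity alone is not expected to recover global multiplicativity, which is consistent with the folklore conjecture stated in the introduction.
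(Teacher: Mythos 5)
Your proposal is correct and follows essentially the same route as the paper: the equivalence of (1)--(3) is read off directly from Theorem \ref{4.3}, and the ``in particular'' statements follow by combining (1), (3) with Theorem \ref{thm0}. Your explicit cycle $(1)\Rightarrow(3)\Rightarrow(2)\Rightarrow(1)$ and the observation that $n\ge 2$ guarantees $\Delta\neq\varnothing$ are just a more careful spelling-out of what the paper leaves implicit.
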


\begin{proof}
   It follows directly from Theorem \ref{4.3} that conditions (1), (2) and (3) are equivalent.
   The relation between the multiplicativity of perverse filtration associated with $f^{[n]}$ and the local multiplicativity follows from (1), (3) and Theorem \ref{thm0}. 
\end{proof}

\end{document}